\definecolor{naomi}{rgb}{0.6, 0.3, 0.8}
\newcommand{\abbr}[1]{{\sc\lowercase{#1}}}
\numberwithin{equation}{section}
\newtheorem{theorem}{Theorem}[section]
\newtheorem{lemma}[theorem]{Lemma}
\newtheorem{proposition}[theorem]{Proposition}
\newtheorem{remark}[theorem]{Remark}
\def\vI{v_{\textsf{I}}}
\def\vR{v_{\textsf{R}}}
\def\N{\mathbb{N}}
\def\bI{\mathbf{I}}
\def\P{\mathbb{P}}
\def\Z{\mathbb{Z}}
\def\R{\mathbb{R}}
\def\S{\mathbb{S}}
\def\bS{\mathbf{S}}
\def\B{\mathbb{B}}
\def\Q{\mathbb{Q}}
\def\cS{\mathcal{S}}
\def\bSig{\boldsymbol \Sigma}
\def\bmu{\boldsymbol \mu}
\def\bro{{\bf \widehat{r}}}
\def\E{\mathbb{E}}
\def\Re{\mathrm{Re}}
\def\Im{\mathrm{Im}}
\def\C{\mathbb{C}}
\def\D{\mathbb{D}}
\def\sinc{\mathrm{sinc}}
\newcommand{\var}{\mathrm{Var}\,}
\newcommand{\cov}{\mathrm{Cov}\,}
\newcommand{\vep}{\varepsilon}
\newcommand{\cJ}{\mathcal{J}}
\newcommand{\calN}{\mathcal{N}}
\newcommand{\cs}{\mathcal{S}}
\newcommand{\p}{\varphi}
\newcommand{\lm}{\lambda}
\begin{document}

\title[Exponential concentration for {zeroes}]{Exponential Concentration for 
Zeroes\\ of Stationary Gaussian Processes}
\author{Riddhipratim Basu}
\author{Amir Dembo}
\author{Naomi Feldheim}
\author{Ofer Zeitouni}
\date{\today}
\thanks{
The research of RB is partially supported by a Simons Junior Faculty Fellowship. 
The research of NF is supported by an NSF postdoctoral fellowship. 
The research of AD is supported by NSF grant DMS-1613091, and 
both AD and OZ are supported by BSF grant 2014019.}

\keywords{stationary Gaussian process, zeroes, concentration, large deviations} 
\subjclass[2000]{Primary: 60G15, 60F10. Secondary: 60G10, 42A38}

\maketitle

\begin{abstract}
We show that for any centered stationary Gaussian process 
of integrable covariance, whose spectral measure has compact support,
or finite exponential moments (and some additional regularity),
the number of zeroes of the process in $[0,T]$ 
is within $\eta T$ of its mean value,
up to an exponentially small in $T$ probability.
\end{abstract}

\section{Introduction}

The study of zeroes of Gaussian stationary processes goes back at least to 
Kac~\cite{Kac} and Rice~\cite{Rice}. Since then, 
much work on this topic appeared in the  
statistics, physics, mathematics and engineering literature.
One of the earliest and most fundamental results in this area is the Kac-Rice formula, which calculates the mean number of zeroes in any interval. A similar formula may be written for the variance of the number of zeroes, but it is much harder to analyze. It was only many years after Kac and Rice that fluctuations and central limit theorems were better understood, with works by Cuzick~\cite{Cuz}, Slud~\cite{Slud}, Aza\"is-Le\'on~\cite{AL} and others. 
Questions of large deviations, that is, estimation of the rare event of having many more or much less zeroes than expected in a long interval, remained almost unexplored. One particular such event is that of having no zeroes at all in a long interval, which is also known by the name of ``persistence''. Results (and speculations) about this event were initiated by Slepian \cite{Slep} and were better understood only recently \cite{FFN}.

In the meantime, complex zeroes of certain Gaussian analytic functions received much attention. Most notably, zeroes of the Fock-Bargmann model were introduced by Sodin-Tsirelson \cite{ST1} and extensively studied by many authors since then. This model has a remarkable property: its zeroes form a point process in the plane with quadratic repulsion, and invariance of distribution under all planar isometries. 
Sodin-Tsirelson proved the asymptotic normality of these zeroes in \cite{ST1}, and moreover, an exponential concentration of the zeroes around the mean in \cite{ST3}
(See also 
\cite{Manju,NSV}
for more about concentration, and 
\cite{NS_zeros}
for finer results on asymptotic normality).
Exponential concentration was proved for other related models, such as nodal lines of spherical harmonics in \cite{NS_nodal}. Inspired by these works, the question of concentration for real zeroes got some attention \cites{Sodin}, \cite[Thm. 2c1]{Tsirel}, but, until now, was not settled even for a single particular example.

The aim of this paper is to prove exponential concentration for \emph{real zeroes} of certain Gaussian stationary functions on $\R$, which have an analytic extension to a strip in the complex plane and smooth spectral density. These conditions allow us to use tools from complex analysis, thus generalizing the mechanics of the aforementioned works on the Fock-Bargmann model.

\bigskip
We consider here centered Gaussian stationary processes $\{X(t): t \in \R\}$ 
having a.s.\ \emph{absolutely continuous} sample path. 
That is, random absolutely continuous functions $f: \R \to \R$, 
whose finite marginal distributions are mean-zero multi-normal,
invariant to real shifts. Normalizing \abbr{wlog} the process $X$ 
to have variance one, its joint law is determined by
$r(t-s) := \cov[X(t),X(s)]$.
Here $r:\R\to\R$ is a continuous, positive semi-definite function (with $r(0)=1$). By Bochner's theorem, this yields
the existence of
a probability measure $\rho$ on $\R$,  
called the \emph{spectral measure},  
such that
\begin{equation}\label{eq:cov-spec}
r(t) 
= \int_{\R} e^{-i \lm t} \ d\rho(\lm) \,.
\end{equation} 
We further assume throughout that $\int \lm^2 d\rho < \infty$, or 
equivalently that $r(t)$ has finite second derivative at $t=0$
(in which case $r(t)$ is twice
continuously differentiable and $-r''(0)=\int \lm^2 d \rho$),
and 
let $N_{X}(I) = |\{t \in I : f(t) = 0 \}|$ count the number of 
zeroes, possibly infinite, of such a process in the 
interval $I \subset \R$. Since $X$ is stationary, 
$\E N_X([0,T]) = \alpha T$ for $\alpha := \E N_{X}([0,1])$ and 
from the Kac-Rice formula we have that in this case 
\begin{equation}\label{eq:Kac-Rice}
\alpha = 
\frac{1}{\pi} \Big( \int \lm^2 d\rho \Big)^{\frac{1}{2}} < \infty \,.
\end{equation}
Indeed, $\alpha/2$ is 
the expected number of $0$-upcrossings by $X([0,1])$, 
all of which are strict (see \cite[Theorem 7.3.2]{LR}),
and $X^{-1} \{0\}\cap [0,1]$ has expected size $\alpha$ since it a.s. 
consists of only the (strict) $0$-upcrossings and $0$-downcrossings
of $X([0,1])$ (see \cite[Theorem 7.2.5]{LR}).
Our objective is to prove 
the following exponential concentration of $N_X([0,T])$.
\begin{theorem}\label{t:nice}
Suppose the centered stationary Gaussian process $\{X(t):t\in \R\}$
of a compactly supported
spectral measure has an
integrable covariance (ie. $\int |r(t)| dt < \infty$).
Then, for some $C<\infty$ and $c(\cdot)>0$,
\begin{equation}\label{eq:exp-conc}
\P\Big(\,|N_{X}([0,T]) - \alpha T| \ge \eta T\,\Big) \leq C e^{-c(\eta) T} \,,
\qquad 
\forall \eta>0, \;\; T<\infty \,.
\end{equation}
Further, if the spectral measure has only a finite 
exponential moment, namely, for some $\kappa > 0$
\begin{equation}\label{eq:exp-mom-nd}
\int_{\R} e^{|\lm| \kappa} \, d \rho(\lm)  < \infty \,, 
\end{equation}
then \eqref{eq:exp-conc} holds whenever $\int |r(t;\kappa_o)| dt <\infty$ 
for some $\kappa_o \in (0,\kappa/2)$ and 
\begin{equation}\label{dfn:r-kappa-o}
r(t;\kappa_o) := \int_\R \cos(t \lm) \cosh (2 \kappa_o \lm) d \rho (\lm) \,.
\end{equation}
\end{theorem}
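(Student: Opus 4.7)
The plan is to lift the real-zero count to the complex plane via the analytic extension of $X$, expressing $N_X([0,T])$ as (essentially) a linear functional of the subharmonic random field $u(z):=\log|X(z)|$ to which Gaussian concentration via blocking can be applied. Under either hypothesis, $X(z)=\int_\R e^{i\lambda z}\,dW(\lambda)$ (with $W$ a complex Gaussian noise of intensity $\rho$) is an analytic random function in a strip $\{|\Im z|<\kappa_o\}$. For a smooth cutoff $\phi(x,y)=\psi(x)\chi(y)$ with $\psi$ approximating $\ind_{[0,T]}$ smoothly on scale $O(1)$ and $\chi\ge 0$ a fixed bump supported in $(-\delta,\delta)$ with $\chi(0)=1$, the Riesz identity
\[
\sum_{z_k:\,X(z_k)=0}\phi(z_k)=\frac{1}{2\pi}\int_{\C}u(z)\,\Delta\phi(z)\,dA(z)
\]
expresses $N_X([0,T])$ modulo a boundary error $O(1)$ and a nonnegative weighted count of non-real zeros $z_k$ with $|\Im z_k|<\delta$, whose expectation is $O(\delta T)$ by Kac-Rice on horizontal lines. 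Choosing $\delta$ small makes this complex-zero correction negligible on the scale $\eta T$.

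The main analytic step is exponential concentration of the linear functional $L_T:=\int_\C u(z)\Delta\phi(z)\,dA(z)$. Integration by parts reduces this to control of $I_T^{\pm}:=\int_0^T\log|X(t\pm i\delta)|\,dt$, stationary integrals of $u$ along horizontal lines. The marginal $u(z)$ has super-exponential upper tail (Gaussian tail of $X(z)$) and exponential lower tail (via the small-ball estimate $\P(|X(z)|<\epsilon)=O(\epsilon)$); crucially, $\E e^{\theta u(z)}<\infty$ for all $\theta>-1$. The horizontal covariances of $u$ are controlled by $r(t;\kappa_o)$ uniformly in $|\Im z|\leq\kappa_o$. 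I would decompose $[0,T]$ into $M=T/\ell$ blocks of length $\ell$ separated by gaps of length $\ell$: a Schur-complement computation shows that the Gaussian blocks become nearly independent, with error governed by $\int_{|t|>\ell}|r(t;\kappa_o)|\,dt\to 0$. A Cram\'er-Chernoff bound applied blockwise, combined with subadditivity of the joint moment generating function, then yields $\P(|I_T^\pm-\E I_T^\pm|\geq \eta' T)\leq Ce^{-c(\eta')T}$.

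The upper tail in \eqref{eq:exp-conc} follows by combining these inputs with the Kac-Rice calibration that identifies $\E[L_T]$ minus the expected complex-zero contribution with $\alpha T+O(1)$. The matching lower tail is treated by bounding $N_X([0,T])$ below by the discrete sign-change count $S_\Delta:=\#\{k<T/\Delta:X(k\Delta)X((k+1)\Delta)<0\}$, whose mean $\pi^{-1}(T/\Delta)\arccos r(\Delta)$ tends to $\alpha T$ as $\Delta\to 0$ (Gaussian orthant formula), and whose concentration follows from the same blocking argument on $\R$. The principal obstacle is that $u=\log|X|$ is not Lipschitz in $X$, so Borell-TIS does not apply directly; the exponential lower tail of $u$ must be tamed by truncating at a scale $\epsilon=e^{-c\ell}$ compatible with the block length, using the small-ball estimate to bound the Lebesgue measure of $\{t:|X(t+i\delta)|<\epsilon\}$, while ensuring the truncation error is absorbed into the exponential rate in $T$. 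This delicate trade-off between the small-ball scale, the block length, and the decay rate in $\int_{|t|>\ell}|r(t;\kappa_o)|dt$ is the main technical bottleneck of the proof.
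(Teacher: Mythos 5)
Your high-level structure matches the paper's: both proofs split into a lower tail handled by discretizing to lattice sign-changes and an upper tail handled by lifting to a random analytic function on a strip and reducing the zero count to integrals of $\log|f|$, with concentration obtained by blocking/thinning to exploit decay of correlations in the strip. The formal mechanism differs: you use the Riesz/Green identity $\sum_k\phi(z_k)=\frac{1}{2\pi}\int u\,\Delta\phi\,dA$ with a smooth cutoff $\phi$, whereas the paper applies Jensen's formula on a disjoint union of small disks $\B_j(\delta)$ covering $[0,T]$, getting $N_f(\B_j(\delta))\le\beta^{-1}[\Gamma_j(\beta)-\Gamma_j(0)]$ where $\Gamma_j$ are averages of $\log|f|$ over circles. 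That choice matters: after a further Jensen inequality the paper replaces circle averages by a \emph{discrete} sum $S_m(iy)=\sum_j[\log|f(jkx_\star+iy)|-\E\log|f(iy)|]$, so the entire concentration problem reduces to bounding fractional moments $\E\prod_j|G_{jk}(y)|^{\pm\vep}$ of finitely many complex Gaussians (Proposition~\ref{p:fracmoment}). Your route stays with the continuum integral $I_T^\pm=\int_0^T\log|X(t\pm i\delta)|\,dt$, which is harder to control because the integrand has a fat negative tail over a continuum of points.

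The real gap is in the sentence ``A Cram\'er--Chernoff bound applied blockwise, combined with subadditivity of the joint moment generating function, then yields $\P(|I_T^\pm-\E I_T^\pm|\ge\eta'T)\le Ce^{-c(\eta')T}$.'' This assertion is precisely the content of the paper's Proposition~\ref{p:fracmoment}, whose proof occupies Sections~\ref{sec:cond}--\ref{sec:mom} and requires the diagonal-dominance estimates of Lemma~\ref{l:diagdominant}, the uniform second-moment bound of Lemma~\ref{l:bad}, the conditional negative-moment bound of Lemma~\ref{l:worstcase}, the bad-set probability bound of Lemma~\ref{l:badprob}, and the near-decoupling of conditional fractional moments in Lemma~\ref{l:goodmoment}. ``Subadditivity of the joint MGF'' does not hold for dependent Gaussians without such work; the paper proves an \emph{approximate tensorization} $M_m(\pm\vep)\le e^{2\vep\zeta m}\E[|G_0(y)|^{\pm 2\vep}]^{m/2}$ and this requires conditioning sequentially and controlling how a realization of one block's Gaussians perturbs the density of the next. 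Your proposed fix — truncate $\log|X|$ at $\log\epsilon$ with $\epsilon=e^{-c\ell}$ and control the Lebesgue measure of $\{t:|X(t+i\delta)|<\epsilon\}$ by small-ball — is not enough by itself: the truncation error is $\int_{\{|X|<\epsilon\}}\log(\epsilon/|X|)\,dt$, and bounding the \emph{measure} of the exceptional set does not bound this integral, since the integrand can still be arbitrarily large near points where $|X|$ dips much below $\epsilon$. You explicitly flag this trade-off as ``the main technical bottleneck,'' which is honest, but it is not resolved; the paper resolves it by working with small fixed fractional powers (so one only ever needs $\E|G_0|^{-4\vep}<\infty$, never an exponential moment), conditioning on the values at the other thinned points, and using that the conditional variance stays bounded below (\eqref{eq:bdd-below-var}) and the conditional mean is controlled on the high-probability ``good'' event, while the ``bad'' event where some $|X_{jk}|$ or $|Y_{jk}|$ is large is killed by Lemma~\ref{l:badprob} and Cauchy--Schwarz / H\"older (\eqref{e:cs2}, \eqref{e:cs1}).

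Two smaller points. For the lower tail you propose point evaluations $X(k\Delta)$, but under the hypotheses of Theorem~\ref{t:nice} via Proposition~\ref{t:lower} one only has a bounded continuous spectral density $p$; the folded spectral density $\sum_n p((\lm+2\pi n)/\Delta)$ of the point-sampled sequence need not be continuous, which is what Theorem~\ref{t:sc} requires. The paper avoids this by sampling the local averages $Y_k=\delta^{-1}\int_0^\delta X(\delta k+t)\,dt$, whose spectral density acquires the extra $\sinc^2$ factor that makes the folded sum converge uniformly. For the ``Kac--Rice calibration,'' your claim that the non-real-zero contribution is $O(\delta T)$ is in the right direction, but the paper's Lemma~\ref{l:approx} uses the non-trivial fact from \cite{Nmean} that, for a translation-invariant Gaussian analytic function with non-atomic spectral measure, $\E N_f([0,1]\times[-r,r])=\alpha+\mu_f([-r,r])$ for an absolutely continuous measure $\mu_f$; this is what makes the correction vanish continuously as $\delta,\beta\to 0$, not a direct Kac--Rice computation.
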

\begin{remark} Theorem \ref{t:nice} applies for example to
any spectral measure whose compactly supported density is 
in $W^{1,2}(\R)$, 
as well as 
covariances such as $r(t)=e^{-t^2/2}$ or $r(t)=1/(1+t^2)$
(of spectral densities $p(\lm)=\frac{1}{\sqrt{2\pi}} e^{-\lm^2/2}$,
and $p(\lm)=\frac{1}{2} e^{-|\lm|}$, respectively), for which 
\eqref{eq:exp-mom-nd} holds and the \abbr{lhs} of 
\eqref{dfn:r-kappa-o} is integrable.
\end{remark}
As shown next, the lower tail in \eqref{eq:exp-conc}
holds under much weaker regularity assumptions.
\begin{proposition}
\label{t:lower}
Suppose the centered stationary Gaussian process $\{X(t):t\in \R\}$
has an absolutely continuous sample path and bounded, continuous spectral 
density $p(\lm)$ with $\int \lm^2 p(\lm) d\lm < \infty$.
Then, for some
$C<\infty$ and $c(\cdot)>0$ we have:
\begin{equation}\label{eq:lower-tail}
\P\Big(\, N_{X}([0,T]) - \alpha T \leq - \eta T\,\Big)\leq Ce^{-c(\eta) T} \,,
\qquad 
\forall \eta>0, \;\; T<\infty \,.
\end{equation}
\end{proposition}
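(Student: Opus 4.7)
I would prove Proposition~\ref{t:lower} by reducing the lower tail to a Gaussian concentration inequality for a bounded Lipschitz functional that underestimates $N_X([0,T])$. The lower tail is much easier than the upper tail because any analytic functional that witnesses ``many sign changes'' is automatically a lower bound on the zero count; one then only has to control its mean from below and verify concentration.

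\textbf{Step 1 (Block decomposition and moments).} Partition $[0,T]$ into $n = \lfloor T/L\rfloor$ intervals $I_k = [(k-1)L,kL]$ of length $L=L(\eta)$ to be chosen. Setting $N_k = N_X(I_k)$, stationarity gives $\E N_k = \alpha L$, and $\sum_k N_k \le N_X([0,T])$. Under the hypotheses on $p$, the Kac--Rice formula for the second factorial moment,
\[
\E[N_k(N_k-1)] = \int_{I_k \times I_k} K_2(s,t)\,ds\,dt,
\]
with $K_2$ built from the joint Gaussian law of $(X(s),X(t),X'(s),X'(t))$, yields $\E[N_k^2] \le C L^2$ by elementary estimates that use boundedness and continuity of $p$ together with $\int \lm^2 p\,d\lm<\infty$.

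\textbf{Step 2 (Truncation and smoothing).} Choose $K=K(L,\eta)$ with $\E[(N_k-K)_+] \le \eta L/8$ (possible via Markov and the second moment bound) and set $\bar Y_k := N_k \wedge K$, so $\E\bar Y_k \ge (\alpha-\eta/8)L$. Next, replace $\bar Y_k$ by a smooth lower bound: for a smooth nonnegative $\chi_\vep$ dominated by $(2\vep)^{-1}\mathbf{1}_{|x|<\vep}$ with $\int \chi_\vep \to 1$, define
\[
\tilde Y_k \;:=\; K \wedge \int_{I_k} \chi_\vep\!\bigl(X(t)\bigr)\,|X'(t)|\,dt.
\]
The area formula gives $\tilde Y_k \le N_k$, and Kac--Rice applied to $\chi_\vep$ plus dominated convergence makes $\E\tilde Y_k \ge (\alpha-\eta/4) L$ for $\vep$ small enough. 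It then suffices to bound $\P\bigl(F(X) - \E F(X) \le -\eta T/2\bigr)$ exponentially, where $F(X) := \sum_k \tilde Y_k$.

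\textbf{Step 3 (Borell--TIS).} The functional $F$ is a bounded Lipschitz functional of the Gaussian process $X$: each summand $\tilde Y_k$ depends only on $(X,X')|_{I_k}$ with Lipschitz constant depending on $K,\vep,L$ but not on $T$, and is capped at $K$. Exploiting the boundedness of $p$, the RKHS of $X$ embeds into $C^1(\R)$ with local norms controlled by the global Cameron--Martin norm; a block-localization argument combined with Cauchy--Schwarz over the $n$ summands yields an overall Cameron--Martin Lipschitz constant $L_F \le C(\eta)\sqrt{T}$. Borell--TIS then gives
\[
\P\bigl(F \le \E F - \eta T/2\bigr) \;\le\; \exp\!\Bigl(-\frac{(\eta T/2)^2}{2 L_F^2}\Bigr) \;\le\; e^{-c(\eta) T}.
\]

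\textbf{Main obstacle.} Step~3 is the crux: obtaining a Lipschitz constant of order $\sqrt{T}$ rather than $T$. The naive per-block Lipschitz bound aggregated by the triangle inequality would give $L_F = O(T)$, which yields only a constant probability bound. The improvement to $\sqrt{T}$ requires effectively exploiting ``near-orthogonality'' of perturbations localized on disjoint blocks in the Cameron--Martin norm of $X$, which is delicate since this RKHS is defined on the spectral side and is intrinsically non-local. A workable route is to discretize $X$ at a fine grid, view $F$ as a function of a Gaussian vector with covariance $\Sigma$, and use Gaussian concentration with constant $\|\Sigma\|_{\text{op}} \le C\|p\|_\infty$; the cap at $K$ and the mollification $\chi_\vep$ together absorb the non-local effects that obstruct the direct block argument.
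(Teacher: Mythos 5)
Your proposal takes a genuinely different route from the paper, but it contains two serious gaps.

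\textbf{Gap in Step 1.} The claim $\E[N_k^2] \leq CL^2$ is not a consequence of the stated hypotheses (bounded continuous $p$ with $\int \lm^2 p\,d\lm < \infty$). Finiteness of the \emph{second} moment of the zero count is not automatic: the Kac--Rice kernel $K_2(s,t)$ for the second factorial moment can be non-integrable near the diagonal $s=t$. The classical sufficient condition is a Geman-type (Dini) condition on $r''$ near the origin, which requires control of the spectral measure beyond the second moment (morally, something like $\int \lm^2\log^{1+\alpha}(1+|\lm|)\,p(\lm)\,d\lm < \infty$). Since Step 2's truncation level $K$ is chosen using this second-moment bound, the rest of the construction is also affected. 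This matters because the whole point of Proposition~\ref{t:lower} is to work under the weak hypotheses stated.

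\textbf{Gap in Step 3.} You have correctly identified the crux — that the Cameron--Martin Lipschitz constant of $F$ must be $O(\sqrt{T})$ — but the argument given does not establish it. The inequality $\sum_k \|h\|_{\mathcal{H}(I_k)}^2 \le C\|h\|_{\mathcal{H}}^2$ (``near-orthogonality of local restrictions'') is \emph{not} a general property of an RKHS, and nothing about bounded $p$ forces it. Moreover, even the per-block Lipschitz claim is doubtful: differentiating $\tilde Y_k$ in direction $h$ produces the term $\int_{I_k}\chi'_\vep(X)\,h\,|X'|\,dt$, which is not controlled in sup norm of $h$ because $\int_{I_k} \mathbf{1}_{\{|X|<\vep\}}|X'|\,dt$ is a local-time–type quantity that is unbounded on the event $\tilde Y_k < K$; the cap at $K$ bounds the \emph{value} of $\tilde Y_k$, not the gradient. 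The sketched discretization alternative (bounding $\|\Sigma\|_{\text{op}}$ by $\|p\|_\infty$) is the right instinct, but it is not carried out, and it also runs into the issue that $\tilde Y_k$ depends on $X'$, which is not a Lipschitz function of grid values of $X$.

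\textbf{Comparison with the paper.} The paper avoids all of this by making the lower tail a consequence of a \emph{discrete-time} statement. One sets $Y_k := \delta^{-1}\int_0^\delta X(\delta k + t)\,dt$; by Rolle's theorem $N_X([0,T]) \ge N^\star_Y(\lfloor T/\delta\rfloor - 1)$, the number of sign changes of the sequence $(Y_k)$. The mean is controlled exactly as you attempt in your Step~2, namely $\delta \cdot \P(Y_0 Y_1 < 0) \to \alpha$ as $\delta \downarrow 0$, using $r(0)=1$, $r'(0)=0$ and continuity of $r''$ at $0$ (which needs only $\int\lm^2 p < \infty$). Concentration of $N^\star_Y$ (Theorem~\ref{t:sc}) is then proved by $(m-1)$-dependent Gaussian approximation (Fej\'er kernel truncation of $\sqrt{p_Y}$) plus Hoeffding's inequality for the $m$-dependent indicator variables — no second moments of $N_X$, no Borell--TIS, and no RKHS analysis are required. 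In particular, this route uses only that the spectral density is bounded and continuous, which is exactly the hypothesis of the proposition, and it sidesteps both of the gaps above.
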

\noindent 
Proposition \ref{t:lower} is a consequence of our next result, on  
exponential concentration of the number of sign changes in $[0,T]$, 
for any \emph{discrete-time}
centered stationary Gaussian process $\{Y_k: k \in \Z\}$  
of continuous spectral density. 
\begin{theorem}
\label{t:sc}
Suppose $\{Y_k : k \in \Z\}$ is a centered stationary Gaussian process
whose spectral measure $\rho_Y$ has a continuous density
$p_Y(\lm)$ (supported within $[-\pi,\pi]$). Then, for 
\begin{equation}\label{def:sign-changes}
N^\star_Y (T) := \sum_{k=0}^{T-1} \mathbf{1}_{\{Y_kY_{k+1}<0\}}\,,
\end{equation}
some $C < \infty$ and $c(\cdot)>0$,
\begin{equation}\label{conc:sign-changes}
\P\Big(\,|N_Y^\star(T)-\E N_Y^\star(T)|\geq \eta T\,\Big)\leq Ce^{-c(\eta) T} \,,
\qquad 
\forall \eta>0, \;\; T \in \N \,. 
\end{equation}
\end{theorem}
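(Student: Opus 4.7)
My plan is to reduce to an $m$-dependent moving-average approximation of $Y$, prove concentration for the approximating process by a blocking + Hoeffding argument, and then control the comparison error via Gaussian anti-concentration of $Y_k$ near zero. Since $p_Y$ is continuous on $[-\pi,\pi]$, by Fej\'er's theorem for every $\varepsilon>0$ there exists a non-negative trigonometric polynomial $p_Y^{(m)}$ of some degree $m=m(\varepsilon)$ with $\|p_Y-p_Y^{(m)}\|_\infty\le\varepsilon$; the Fej\'er--Riesz factorization then gives $p_Y^{(m)}(\lm)=|Q(e^{i\lm})|^2$ for a polynomial $Q(z)=\sum_{j=0}^m q_j z^j$, so that $Y_k^{(m)}:=\sum_{j=0}^m q_j\,\xi_{k-j}$ for i.i.d.\ $N(0,1)$ innovations $\{\xi_j\}$ is a strictly $m$-dependent stationary Gaussian process with spectral density $p_Y^{(m)}$. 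Coupling $Y$ and $Y^{(m)}$ via their spectral representations on a common probability space produces a jointly stationary Gaussian pair whose difference $\Delta_k:=Y_k-Y_k^{(m)}$ satisfies $\var(\Delta_0)\le C\varepsilon$.

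For the approximating process, $A_k^{(m)}:=\mathbf{1}\{Y_k^{(m)}Y_{k+1}^{(m)}<0\}$ is an $(m+1)$-dependent, $[0,1]$-valued sequence. Partitioning $\{0,\dots,T-1\}$ into consecutive length-$(m+1)$ blocks and grouping them by parity exhibits $N^\star_{Y^{(m)}}(T)$ as a sum of two independent sums of bounded block-totals, and Hoeffding's inequality yields
\begin{equation*}
\P\bigl(|N^\star_{Y^{(m)}}(T)-\E N^\star_{Y^{(m)}}(T)|\ge\tfrac{\eta T}{2}\bigr)\le 4\exp\bigl(-c\eta^2 T/m\bigr).
\end{equation*}
For the comparison, a disagreement $A_k\ne A_k^{(m)}$ forces $\mathrm{sign}(Y_j)\ne\mathrm{sign}(Y_j^{(m)})$ for some $j\in\{k,k+1\}$, hence $|Y_j|\le|\Delta_j|$, so for every $\delta>0$,
\begin{equation*}
|N^\star_Y(T)-N^\star_{Y^{(m)}}(T)|\le 2\sum_{k=0}^{T}\mathbf{1}\{|Y_k|\le\delta\}+2\sum_{k=0}^{T}\mathbf{1}\{|\Delta_k|\ge\delta\}.
\end{equation*}

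The hardest step is obtaining exponential upper tails for these two error sums. For the first, the elementary bound $\mathbf{1}\{|Y_k|\le\delta\}\le\mathbf{1}\{|Y_k^{(m)}|\le 2\delta\}+\mathbf{1}\{|\Delta_k|\ge\delta\}$ reduces it to an $(m+1)$-dependent sum, controlled by the same blocking (with mean $O(\delta)$ by Gaussian anti-concentration), plus another copy of the second sum. For the second, $\sum_k\mathbf{1}\{|\Delta_k|\ge\delta\}\le\delta^{-2}\sum_k\Delta_k^2$ is, up to the factor $\delta^{-2}$, a Gaussian quadratic form in a stationary process of spectral density bounded by a constant multiple of $\varepsilon$; a Hanson--Wright-type inequality (equivalently, a $\chi^2$-Chernoff bound after diagonalization, using that the operator norm of the covariance of $(\Delta_0,\dots,\Delta_T)$ is at most $2\pi\varepsilon$) yields exponential upper-tail concentration in $T$ as soon as $\varepsilon\ll\delta^2$. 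Given $\eta>0$, I would then choose $\delta=c'\eta$ so that the anti-concentration mean is at most $\eta T/10$, pick $\varepsilon=\varepsilon(\eta)$ (and hence $m=m(\eta)$) small enough to absorb the $\Delta$-error and the mean shift $|\E N^\star_Y-\E N^\star_{Y^{(m)}}|=O(\varepsilon T)$, and sum the tail bounds to obtain the asserted $Ce^{-c(\eta)T}$ concentration.
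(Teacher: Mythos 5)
Your argument is correct and shares the paper's high-level strategy --- approximate $Y$ by an $m$-dependent Gaussian process via Fej\'er smoothing, apply a blocking plus Hoeffding argument, and control the remainder through its uniformly small spectral density --- but each of the three component steps is implemented differently. The paper Fej\'er-truncates the Fourier coefficients $a_k$ of $\sqrt{p_Y}$ and writes $Y_k=W_k^{(m)}+Z_k^{(m)}$ through the moving-average representation, where $Z^{(m)}$ has spectral density $(\sqrt{p_Y}-\sqrt{p^{(m)}_W})^2\to 0$ uniformly; you Fej\'er-smooth $p_Y$ itself and couple $Y$ with $Y^{(m)}$ via a common spectral representation, and your Fej\'er--Riesz factorization step is in fact superfluous, since $(m-1)$-dependence of a Gaussian sequence only requires that the spectral density $p_Y^{(m)}$ be a trigonometric polynomial of degree $<m$, not a moving-average form. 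To compare the two sign-change counts, the paper uses the set inclusion $\{Y_kY_{k+1}<0\}\subseteq\{W_k^{(m)}W_{k+1}^{(m)}<3\xi\}\cup\{|W^{(m)}_k|\ge R\}\cup\cdots$ and resolves the resulting threshold via Lemma~\ref{l:continuity}, a weak-convergence continuity statement for $\xi\mapsto\P(W_0^{(m)}W_1^{(m)}<\xi)$; you instead bound $|N^\star_Y-N^\star_{Y^{(m)}}|$ pathwise by counting sign disagreements, absorbing the threshold $\delta$ through Gaussian anti-concentration of $Y_k^{(m)}$ near the origin, and you then handle separately the mean shift $|\E N^\star_Y-\E N^\star_{Y^{(m)}}|$ (fine, since the hypothesis forces $|r_Y(1)|<1$, making the Gaussian orthant probability locally Lipschitz in the covariance). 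Finally, for the remainder, the paper invokes an $L^1$ exponential-moment bound $\E\exp(\theta_m\sum_k|Z^{(m)}_k|)$ cited from \cite{BD}, whereas you use the $\chi^2$-Chernoff bound on $\sum_k\Delta_k^2$ via the operator-norm estimate $\|\cov(\Delta_0,\ldots,\Delta_T)\|\le 2\pi\varepsilon$; these are essentially interchangeable, with yours being somewhat more self-contained. Both routes yield the stated exponential concentration.
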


\begin{remark}
In the setting of Theorem \ref{t:sc} one has  
that $T^{-1} \sum_{k=0}^{T-1} h(Y_k,Y_{k+1})$ satisfies the \abbr{LDP}
for any fixed $h \in C_b(\R^2)$,
with a convex, good rate function (see \cite[Theorem 4.25]{BJ}).
While this may be extended to $N^\star_Y(T)$ 
by suitable approximations, we do not follow this route 
since the rate function is in any case
not easily identifiable (see \cite[Sect. 7(a)]{BJ}).
\end{remark}

Proposition \ref{t:lower} will follow from Theorem \ref{t:sc}, using the key observation that having few zeroes of a continuous time process implies few sign changes of its restriction to a lattice.
This approach does not work for the 
more challenging
upper tail in \eqref{eq:exp-conc}, since having many zeroes does not imply having many sign-changes on a lattice,
and to establish the upper tail we require  
the following decay and regularity assumptions 
about the spectral density of $X$.

\noindent
\textbf{Assumption A:} 
The spectral measure is non-atomic and 
has finite exponential moment as in \eqref{eq:exp-mom-nd}.
Further, the covariance functions
 \begin{equation}\label{dfn:ry-ell}
r_{\ell}(x;y) := \int_{\R} e^{-i \lm x} \varphi^{\ell}(\lm y) d \rho (\lm) \,, 
\quad \ell =1,2 \,, \qquad \varphi(\lm) := \sinh(\lm)/\lm \,, 
\end{equation}
and their $x$-derivatives, 
satisfy for some $\kappa' \in (0,\kappa/2)$ and finite $x_\star$,
\begin{equation}
\omega_\star (k) : = 
\sum_{j \ge k} |r(j x_\star)| + 
\sup_{|y| < \kappa'} \Big\{ 
\sum_{j \ge k} |r_1'(j x_\star;2y)| + 
\sum_{j \ge k} |r''_{2}(j x_\star;y)| \Big\} \to 0  \,, 
\;\; \mbox{ when } \;\; k \to \infty \,.
\label{eq:reg-spect-dens}
\end{equation}
Equipped with Assumption A, we state our main (technical) result.
\begin{theorem}
\label{t:upper}
Subject to Assumption A we have 
for some $C<\infty$ and $c(\cdot)>0$, the exponential upper tail
\begin{equation}\label{eq:upper-tail}
\P\Big(\, N_{X}([0,T])- \alpha T \geq 3 \eta T\Big) \leq Ce^{-c (\eta) T}\,,
\qquad 
\forall \eta>0, \;\; T < \infty \,. 
\end{equation}
\end{theorem}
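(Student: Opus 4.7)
My plan is to adapt the complex-analytic concentration strategy of Sodin--Tsirelson~\cite{ST3} for planar \abbr{GAF}s to the present real-stationary setting. \textbf{Step 1 (Analytic extension; reduction to complex zero count).} Using \eqref{eq:exp-mom-nd}, extend $X$ a.s.\ to an analytic Gaussian function $X(z) = \int_{\R}e^{-i\lm z}\,dZ(\lm)$ on the strip $\{|\Im z|<\kappa/2\}$. For any $\kappa'\in(0,\kappa/2)$, every real zero of $X$ in $[0,T]$ is a zero of the complex extension in $R_T := [0,T]\times[-\kappa',\kappa']$, so $N_X([0,T])\le n_T$ where $n_T$ counts complex zeros of $X$ in $R_T$. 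The excess $\E n_T-\alpha T$ is bounded by the Edelman--Kostlan $2$-dimensional density of non-real zeros integrated over $R_T\setminus\R$, which is of order $\kappa' T$ as $\kappa'\to 0$; taking $\kappa'=\kappa'(\eta)$ small enough yields $\E n_T\le \alpha T+\eta T$, reducing the problem to producing an exponential upper tail for $n_T-\E n_T$ at scale $\eta T$.

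\textbf{Step 2 (Poincar\'e--Lelong bound on $n_T$).} With a smooth nonnegative cutoff $\chi(z)=\psi(\Re z)\beta(\Im z)$ satisfying $\chi\equiv 1$ on $R_T$ and supported in $R_T':=[-1,T+1]\times[-\kappa'',\kappa'']$ for some $\kappa''\in(\kappa',\kappa/2)$, the Poincar\'e--Lelong identity and integration by parts give
\[
n_T \;\leq\; \sum_\zeta\chi(\zeta) \;=\; \frac{1}{2\pi}\int_{R_T'}\log|X(z)|\,\Delta\chi(z)\,dA(z).
\]
The dominant contribution (both derivatives falling on $\beta$) reduces after the $x$-integration to $\frac{1}{2\pi}\int\beta''(y)\Lambda(y;T)\,dy$, where $\Lambda(y;T):=\int_{-1}^{T+1}\psi(x)\log|X(x+iy)|\,dx$; the boundary contributions from $\psi''$ live on two $O(1)$-width vertical strips near $x=0,T$ and contribute $O(1)$-terms with Gaussian tails. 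The problem is thus reduced to exponential concentration of $\Lambda(y;T)$ around its mean, uniformly in $|y|\leq\kappa''$.

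\textbf{Step 3 (Discretization, concentration; main obstacle).} Approximate $\Lambda(y;T)\approx\hat\Lambda(y):=x_\star\sum_{j=0}^{T/x_\star-1}\log|X(jx_\star+iy)|$ at the spacing $x_\star$ of Assumption~A. The stationary sequence $Z_j:=\log|X(jx_\star+iy)|$ is built from the Gaussian pair $(\Re X,\Im X)(jx_\star+iy)$, whose two-point covariances decompose (via the identities $\cosh^2(\lm y)=(1+\cosh(2\lm y))/2$ and $\sinh^2(\lm y)=(\lm y)^2\varphi^2(\lm y)$) into combinations of $r(\cdot)$, $r_1(\cdot;2y)$, and $r_2(\cdot;y)$; the $x$-derivatives $r_1'(\cdot;2y)$ and $r_2''(\cdot;y)$ arise in bounding the quadrature error and the $X'/X$ remainders---hence the precise summability hypothesis \eqref{eq:reg-spect-dens}. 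The main obstacle is that $Z_j$ is unbounded below, so Borell's Gaussian concentration inequality does not apply directly to $\hat\Lambda$. I resolve this by a two-scale truncation: split $Z_j=Z_j^+-Z_j^-$ and truncate each at $M=\Theta(\log T)$; the truncation error is exponentially small by a union bound, since $\P(|X(jx_\star+iy)|<e^{-M})$ is of order $e^{-M}$, giving $Z_0^-$ an exponential tail, while $Z_0^+$ has Gaussian tails. For the truncated sums, exponential concentration follows from a cumulant or Bernstein-type bound for weakly dependent stationary Gaussian functionals, with the absolute summability $\omega_\star(k)\to 0$ in Assumption~A propagating into linear-in-$T$ bounds for the higher cumulants of $\sum_j Z_j$ uniformly in $|y|\leq\kappa''$. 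Verifying that the three specific summable series in \eqref{eq:reg-spect-dens} are exactly what is needed to control these cumulants, and hence the exponential upper tail of $N_X([0,T])-\alpha T$, is the most delicate step of the proof.
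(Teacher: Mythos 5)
Your Steps 1 and 2 run parallel to the paper: the paper constructs the analytic Gaussian extension $f$ in Proposition~\ref{t:ext}, dominates $N_X([0,T])$ by the zero count of $f$ in a thin strip, and then, instead of a Poincar\'e--Lelong argument with a smooth cutoff, applies Jensen's formula on a union of small disjoint disks covering $[0,T]$; the excess mean is controlled exactly as you indicate, via Feldheim's formula for the non-real zero intensity (Lemma~\ref{l:approx}). Both routes reduce the claim to exponential concentration for a sum $\sum_j \widehat\Gamma_j$ of (centered) circular/linear averages of $\log|f|$, so the choice of Jensen vs.\ Poincar\'e--Lelong is stylistic.

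The real gap is in Step~3, and it is the whole difficulty of the theorem. Your truncation at level $M=\Theta(\log T)$ does not deliver an exponential-in-$T$ bound: the union bound $\P(\exists j\le T/x_\star:\ Z_j^->M)\lesssim T\,e^{-M}$ requires $M=\Theta(T)$ to be exponentially small, and with bounded variables of range $\Theta(\log T)$ a Bernstein/cumulant bound gives at best $\exp(-c\eta T/\log T)$. More fundamentally, the appeal to ``a cumulant or Bernstein-type bound for weakly dependent stationary Gaussian functionals'' is not a theorem you can invoke here: the summands $\log|X(jx_\star+iy)|$ are non-Lipschitz, heavy-tailed below, and to get $e^{-c\eta T}$ one needs the exponential moment $\E\exp(\vep\sum_j Z_j)$ to grow at most geometrically in the number of terms, uniformly over the strip. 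That is exactly the content of the paper's Proposition~\ref{p:fracmoment} on fractional moments $\E\prod_j|G_{jk}(y)|^{\pm\vep}$, which is the technical heart of the proof and is established from scratch via the diagonal-dominance machinery of Section~\ref{sec:cond}: a good/bad decomposition in which Lemma~\ref{l:badprob} controls large values, Lemma~\ref{l:goodmoment} shows that conditioning on weakly correlated, bounded values barely moves the $\pm\vep$ moment, and Lemmas~\ref{l:bad} and~\ref{l:worstcase} provide a priori $\pm$ moment bounds on the bad block. Your summability hypothesis (\ref{eq:reg-spect-dens}) indeed enters through this decorrelation step (via Lemma~\ref{l:decay}), as you correctly anticipate, but there is no shortcut through a generic weak-dependence concentration inequality; you would need to prove the fractional-moment (or equivalent exponential-moment) bound directly.
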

\noindent
In particular, we recover Theorem \ref{t:nice} from
Proposition \ref{t:lower} and Theorem \ref{t:upper}, thanks to 
the following explicit 
sufficient condition for Assumption A.
\begin{proposition}\label{p:suff-A}  
Assumption A is satisfied, the spectral measure 
$\rho(\cdot)$ has a bounded, continuous density, and 
a.s. the sample path $t \mapsto X(t) \in C^\infty(\R)$, 
when either of the following holds:
\newline
(a). The support of $\rho(\cdot)$ is compact and
$\int |r(t)| dt < \infty$ for the covariance $r(t)$ of \eqref{eq:cov-spec}.
\newline
(b). Condition \eqref{eq:exp-mom-nd} holds and 
$\int |r(t;\kappa_o)| dt <\infty$ for the covariance 
$r(t;\kappa_o)$ of \eqref{dfn:r-kappa-o}.
\end{proposition}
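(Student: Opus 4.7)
The plan is to verify each of the three conclusions in turn, with the bulk of the work devoted to the summability part of Assumption A. For the bounded continuous density, Fourier inversion applied to $r\in L^1$ (case (a)) or to $r(\cdot;\kappa_o)\in L^1$ (case (b)) directly furnishes $p$; in case (b) it produces a bounded continuous $g(\lambda):=\cosh(2\kappa_o\lambda)p(\lambda)\in C_0(\R)\cap L^\infty$, so that $p(\lambda)=g(\lambda)/\cosh(2\kappa_o\lambda)$ is continuous, bounded, and $O(e^{-2\kappa_o|\lambda|})$. In particular $\rho$ is non-atomic. For the $C^\infty$ sample paths, all polynomial moments of $\rho$ are finite under either hypothesis, so $r\in C^\infty(\R)$ and for each $k\in\N$ the mean-square derivative $X^{(k)}$ is a stationary Gaussian process with smooth covariance $(-1)^k r^{(2k)}$; Kolmogorov's continuity criterion (using Gaussian moment bounds $\E|X^{(k)}(t)-X^{(k)}(s)|^{2m}=O(|t-s|^{2m})$) and a standard chaining argument give a modification of $X$ in $C^\infty(\R)$ a.s.

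The heart of the matter is the summability condition \eqref{eq:reg-spect-dens}. I would first record, by differentiating under the integral and using the symmetry of $\rho$, the identities
\[ r_1'(x;2y) = \frac{\Im\,r(x+2iy)}{2y}, \qquad r_2''(x;y) = -\frac{1}{2y^2}\bigl[\Re\,r(x+2iy) - r(x)\bigr], \]
which reduce matters to controlling the analytic extension of $r$ to the strip $|\Im\,z|<\kappa$. The aim is then to dominate $r(x)$, $r_1'(x;2y)$ and $r_2''(x;y)$, uniformly for $|y|\le\kappa'$ with some $\kappa'\in(0,\kappa_o)$, by a single $h\in L^1(\R)$. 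Under (a), $r$ is entire of exponential type at most the radius $M$ of $\mathrm{supp}(\rho)$, and via the identity $\varphi(\lambda y)=\int_0^1\cosh(\lambda ys)\,ds$ each $r_\ell(\cdot;y)$ is an average of $\Re\,r(x+i\cdot)$ along a bounded set of imaginary arguments, hence is also entire of type $\le M$. Bernstein's inequality $\|f'\|_{L^1}\le M\|f\|_{L^1}$ then transfers the $L^1$ integrability of $r$ to all required derivatives, uniformly in $y$. Under (b), I would exploit $p=g/\cosh(2\kappa_o\lambda)$ to realize each weighted covariance $r(\cdot;y)$ (with $|y|<\kappa_o$) as a convolution $r(\cdot;\kappa_o)\ast K_y$, where $K_y$ is the Fourier transform of $\cosh(2\lambda y)/\cosh(2\lambda\kappa_o)$; this ratio is analytic in $|\Im\,\lambda|<\pi/(4\kappa_o)$ with exponential decay along horizontal lines, so $K_y\in L^1(\R)$ uniformly for $|y|\le\kappa'$, and Young's inequality yields the uniform $L^1$ bound. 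Analogous convolution representations handle $r_1'(\cdot;2y)$ and $r_2''(\cdot;y)$.

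Once such a uniform $L^1$ dominant $h$ is at hand, a Fubini argument yields $\int_a^b\sum_{j\ne 0}|h(jx)|\,dx \le C\log(b/a)\|h\|_{L^1}$, so that $\sum_j|h(jx_\star)|<\infty$ for a.e.\ $x_\star\in[a,b]$; dominated convergence then gives $\omega_\star(k)\to 0$ for any such $x_\star$. The main obstacle I anticipate is securing the uniform-in-$y$ $L^1$ bound on $r_2''(\cdot;y)$: its apparent $1/y^2$ singularity at $y=0$ must cancel, since $r_2''(x;0)=r''(x)\in L^1$ (again by Bernstein or convolution), and the uniformity is obtained from the power-series expansion $\cosh(2\lambda y)-1 = \sum_{n\ge 1}(2\lambda y)^{2n}/(2n)!$ together with the exponential-moment hypothesis to control each term's contribution.
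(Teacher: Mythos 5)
Your approach is genuinely different from the paper's. The paper picks a smooth auxiliary cutoff $\psi$ with $\psi p = p$ (compactly supported in case (a), equal to $1/\cosh(2\kappa_o\lambda)$ in case (b)), writes $r$, $r_1'(\cdot;2y)$, $r_2''(\cdot;y)$ as convolutions of $r$ (resp.\ $r(\cdot;\kappa_o)$) with the $\psi$-transforms $r_{\ell;\psi}$, and controls the latter pointwise by $c_\psi/(1+x^2)$ uniformly in $|y|<\kappa'$ via two integrations by parts in $\lambda$. This yields an explicit dominant $g(x)=c_\psi\int |r(t)|/[1+(x-t)^2]\,dt$ whose integer sums are finite, so $x_\star=1$ works. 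You instead use the strip identities $r_1'(x;2y)=\Im r(x+2iy)/(2y)$ and $r_2''(x;y)=-[\Re r(x+2iy)-r(x)]/(2y^2)$ (correct), appeal to Bernstein and Plancherel--Polya in case (a) and to convolution against the kernel of $\cosh(2\lambda y)/\cosh(2\kappa_o\lambda)$ in case (b), and then choose $x_\star$ by a Fubini/a.e.\ argument. Your handling of the bounded continuous density, non-atomicity and a.s.\ $C^\infty$ paths is sound, and the $\cosh$-ratio kernel in (b) is essentially the paper's choice of $\psi$.

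The gap is in passing from a \emph{uniform $L^1$ bound} to a \emph{uniform $L^1$ dominant}. Bernstein and Plancherel--Polya give $\sup_{|y|<\kappa'}\|r_1'(\cdot;2y)\|_{L^1}<\infty$, but your Fubini step and the requirement \eqref{eq:reg-spect-dens} need the stronger $\big\|\,\sup_{|y|<\kappa'}|r_1'(\cdot;2y)|\,\big\|_{L^1}<\infty$: Assumption A asks for $\sup_{|y|<\kappa'}\sum_{j\ge k}|\cdot|\to 0$, i.e.\ a tail that vanishes \emph{uniformly} in $y$. Running your Fubini estimate for each fixed $y$ only produces a full-measure set of good $x_\star$ that a priori depends on $y$; doing Fubini in $(x,y)$ jointly gives a.e.-$(x,y)$ finiteness of the sum, which is still weaker than the required $\sup_y$. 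You state the right goal (``dominate ... uniformly ... by a single $h\in L^1$'') but the tools you invoke do not produce such an $h$. To close the gap you need a genuine pointwise majorant, e.g.\ a Nikol'skii/reproducing-kernel bound $|f(x)|\le\int |f(u)|K(x-u)\,du$ for bandlimited $f$ with an $L^1$ kernel $K$; this is, in effect, exactly what the paper's cutoff-$\psi$ plus double integration by parts provides, with $K(x)=c_\psi/(1+x^2)$. Once such an $h$ is exhibited, the rest of your argument goes through.
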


\medskip
It is reasonable 
when seeking the exponential concentration of $N_X([0,T])$,
to require 
smoothness of the covariance $r(\cdot)$,
such as having all spectral moments 
finite (or the stronger condition \eqref{eq:exp-mom-nd}). 
Indeed, such exponential concentration implies 
the finiteness of all moments $m_k := \E [ N_X([0,T])^k ]$, 
with $m_k = O((T \vee k)^k)$, and such conditions appear 
in previous studies concerning the finiteness of $\{m_k\}$. 
For instance, Nualart and Wschebor \cite{NW} show that $m_k<\infty$ for all $k$ 
when $t \mapsto r(t)$ is real-analytic 
(hence all spectral moments are finite), while when
$\int \lm^4 d\rho = \infty$, Cuzick \cite{Cuz2} can prove only
the finiteness of $m_k$ up to a certain order $k_o$. Similarly, 
Longuett-Higgins \cite{LH} shows that for 
$r(t)$ real-analytic,
$q_k(\tau):=\P(N_X(0,\tau) \ge k)$ decays, for $\tau \to 0$,
as $c(k) \tau^{\frac 1 2 k^2 + O(k)}$ 
(indicative of the mutual repulsion of zeroes), 
while 
with a discontinuity of $r^{(3)}$ at the origin
the decay of $q_k(\tau)$
is merely $c(k)\tau^2$ for all $k$ 
(so having a pair of nearby zeroes, the probability 
of $k$ extra zeroes within the same short interval is $O_k(1)$).

\medskip
A natural path towards proving the upper tail in our concentration result is to 
improve Cuzick's results on moments $m_k$ \cite{Cuz} or Longuett-Higgins estimates on the tail of the number of zeroes $q_k$ \cite{LH}, so as to get accurate asymptotics of those quantities in $k$. Efforts in this direction were 
made by many authors (e.g. 
\cite{AL} and the references within).
However, in our context it requires a lower bound on the determinant of
nearly singular matrices (specifically, the covariance matrices for values 
of $X(t)$ at a short range), at a level of accuracy which seems out of reach.
We bypass this difficulty by relating $N_X([0,T])$ to the count of zeroes 
within a suitable cover of $[0,T]$, for certain 
random analytic function $f:\S \to \C$ on a thin strip.
Thereby, complex analytic tools 
allow us to replace exponential moments of zero counts by   
more regular integrals of $\log |f(z)|$. After this reduction, the 
core challenge of our strategy remains in the need to sharply estimate
fractional moments of products of many dependent Gaussian variables. 
This highly non-trivial task (even for integer moments, see
\cite{SL} and the references within), requires    
our assumption \eqref{eq:reg-spect-dens}, in order 
to get suitable diagonally dominant covariance matrices.

\medskip
The paper is organized as follows.
In Section \ref{sec:sign} we prove Proposition \ref{t:lower},
Theorem \ref{t:sc} and Proposition \ref{p:suff-A}. 
The remainder of the paper is devoted to the proof of Theorem \ref{t:upper}.
In Section \ref{sec:main} this theorem is reduced to the key 
Proposition \ref{p:fracmoment}, concerning fractional moments of products 
of $\C$-valued Gaussian random variables. Proposition \ref{p:fracmoment} 
is proved in Section \ref{sec:mom}, building on the auxiliary 
results about weakly-correlated Gaussian 
variables that we 
establish in Section \ref{sec:cond}.


\section{Proofs of Proposition \ref{t:lower}, Theorem \ref{t:sc} and Proposition \ref{p:suff-A}}
\label{sec:sign}

\subsection{Proof of Proposition \ref{t:lower}}
Assume \abbr{wlog}
that $c(\eta) \leq 1$ and $C \ge e$. It suffices to consider 
$T \ge 1$. Fixing small $\delta >0$, by 
the mean-value theorem we have that 
$N_{X}([0,T]) \ge N^\star_Y([T/\delta]-1)$ for the 
stationary centered Gaussian sequence 
$Y_k:=\delta^{-1} \int_0^\delta X(\delta k + t) dt$. 
It is further easy to check that 
\begin{equation}\label{def:rY}
\gamma_k := \E Y_0 Y_k =  \delta^{-2} \, \int_0^\delta \int_0^\delta 
r (\delta k + t - s) ds \, dt  \,,
\end{equation}
corresponds to the spectral density 
\[
p_Y(\lm) = \frac{1}{\delta} 
\sum_{n \in \Z} p \Big(\frac{\lm+2\pi n}{\delta}\Big) 
\sinc^2 \Big(\frac{\lm}{2} + \pi n\Big)\,
\quad \lm\in[-\pi,\pi],
\]
where $\sinc(\lm) := \frac{\sin \lm}{\lm}$. Note that for $p(\cdot)$ bounded and continuous, 
$p_Y$ is also continuous (by dominated convergence). 
Here $r(0)=1$, $r'(0)=0$ and $-r''(t)$, 
being the characteristic function of the finite measure $\lm^2 p(\lm) d\lm$,
is continuous at $t \to 0$. We thereby get from \eqref{def:rY} 
that $\gamma_0 \to 1$ and 
$2 \delta^{-2} (\gamma_1-\gamma_0) \to r''(0)$ when  
$\delta \downarrow 0$. By a Gaussian computation 
$\P(Y_k Y_{k+1} < 0) = \frac{1}{\pi} \arccos(\gamma_1/\gamma_0)$ 
and it follows that  
\[
\inf_{T \ge 1} \frac{1}{T} \E N_Y^\star([T/\delta]-1) \ge (\delta^{-1}-2)
\P(Y_0 Y_1 < 0) \to \alpha \,.
\]
As a result of the preceding, 
we get \eqref{eq:lower-tail} by considering 
\eqref{conc:sign-changes} for $\delta=\delta(\eta)>0$ small enough.
\qed

\subsection{Proof of Theorem \ref{t:sc}}
We shall use the following easy consequence of weak convergence.
\begin{lemma}\label{l:continuity}
Let $(Y_0,Y_1)$ be a zero-mean jointly Gaussian, having  
$\E [Y_0^2] > 0$, $\E [Y_1^2]>0$ 
and $\alpha_\xi := \P(\, Y_0 Y_1 < \xi)$.
If   
the 
covariance matrices $\bSig^{(m)}$ of the 
zero-mean Gaussian vectors $(W^{(m)}_0,W^{(m)}_1)$ converge  
to the covariance matrix $\bSig$ of $(Y_0,Y_1)$, then 
\begin{equation}\label{eq:cont-m-g}
\lim_{\xi \to 0} \lim_{m \to \infty} \alpha^{(m)}_{\xi} = \alpha_0 \,,
\qquad \qquad 
\alpha^{(m)}_\xi := \P (\, W_0^{(m)} W_1^{(m)} < \xi \,).
\end{equation}
\end{lemma}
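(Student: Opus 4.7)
The plan is to combine weak convergence of the Gaussian vectors with continuity of the distribution function of the limiting product $Y_0 Y_1$, and then interchange the two limits in a single direction (inner $m$, outer $\xi$). I would proceed in three steps.

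\emph{Step 1 (weak convergence of the products).} Since $\bSig^{(m)} \to \bSig$ entrywise, the characteristic functions $t \mapsto \exp(-\tfrac12 t^\top \bSig^{(m)} t)$ of the mean-zero Gaussian vectors $(W_0^{(m)}, W_1^{(m)})$ converge pointwise to that of $(Y_0, Y_1)$. L\'evy's continuity theorem then yields $(W_0^{(m)}, W_1^{(m)}) \Rightarrow (Y_0, Y_1)$, and the continuous mapping theorem applied to $(x,y) \mapsto xy$ gives $W_0^{(m)} W_1^{(m)} \Rightarrow Y_0 Y_1$.

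\emph{Step 2 (no atoms in the law of $Y_0 Y_1$).} I would next verify that $F(\xi) := \P(Y_0 Y_1 \le \xi)$ is continuous on all of $\R$, i.e.\ that $\P(Y_0 Y_1 = c) = 0$ for every $c \in \R$. If $\bSig$ is nonsingular, then $(Y_0, Y_1)$ has a Lebesgue density on $\R^2$, and the hyperbola $\{(y_0, y_1) : y_0 y_1 = c\}$ carries zero planar measure. If $\bSig$ is singular, the assumption $\E Y_0^2, \E Y_1^2 > 0$ forces $Y_1 = \beta Y_0$ a.s.\ for some $\beta \ne 0$, whence $Y_0 Y_1 = \beta Y_0^2$ is a scaled (possibly signed) $\chi^2_1$-variable and hence has no atoms. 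In either case $F$ is continuous on $\R$.

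\emph{Step 3 (iterated limit).} Steps 1 and 2 imply that for every fixed $\xi \in \R$, $\{x : x < \xi\}$ is a continuity set for the law of $Y_0 Y_1$, so $\alpha_\xi^{(m)} \to \alpha_\xi$. Since $F$ is continuous at $0$, $\alpha_\xi = F(\xi^-) \to F(0) = \P(Y_0 Y_1 < 0) = \alpha_0$ as $\xi \to 0$, yielding \eqref{eq:cont-m-g}. The only non-routine step is Step 2, where the possibly singular covariance must be handled separately; once that is in place, Step 1 and Step 3 are direct applications of standard weak-convergence tools.
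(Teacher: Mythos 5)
Your proof is correct and follows essentially the same route as the paper's: establish that the law of $Y_0Y_1$ has no atoms (using the positive-variance hypothesis, with your two-case analysis just making explicit what the paper asserts in one sentence), then use this to pass to the limit in $m$ for fixed $\xi$ and in $\xi$ afterwards. The elaborations via L\'evy's continuity theorem, the continuous mapping theorem, and the singular/nonsingular dichotomy are standard fillers of the same argument, not a different approach.
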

\begin{proof} Since $Y_0$ and $Y_1$ have positive variances, the
\abbr{cdf} of $Y_0 Y_1$ is continuous, so 
the weak convergence of $(W^{(m)}_0,W^{(m)}_1)$ to $(Y_0,Y_1)$ 
implies that for any $\xi$ fixed, 
$\alpha^{(m)}_\xi \to \alpha_\xi$ as $m \to \infty$.
Further, the monotone function $\xi \mapsto \alpha_\xi$ is then 
continuous and \eqref{eq:cont-m-g} holds.
\end{proof}

\smallskip
We turn to the proof of Theorem \ref{t:sc}. \abbr{wlog} normalize 
to have $\E Y_0^2 =1$. Let $\eta>0$ be given. 
For any $m \ge 2$ we approximate $Y$ by an $(m-1)$-dependent process using 
the following construction (see e.g.\ \cite[Proof of Theorem A]{BD}).
Let $\{a_k : k \in \Z \}$ denote the Fourier coefficients of the continuous function 
$\sqrt{p_Y(\lm)}$ on $[-\pi,\pi]$, and define 
$a_k^{(m)} := \left(1-\frac{|k|}{m-1}\right)_+ a_k$.
Then $Y_k = W^{(m)}_k + Z^{(m)}_k$, where $\{W^{(m)}_k : k \in \Z \}$ is an
$(m-1)$-dependent, 
centered, stationary Gaussian sequence with 
covariance $\E [W^{(m)}_0 W^{(m)}_n] = \sum_k a_k^{(m)} a^{(m)}_{k+n}$,
while by Fejer's theorem,
the spectral density $p^{(m)}_{Z}(\lm)=\left(\sqrt{p_Y(\lm)}  -\sqrt{p^{(m)}_{W}(\lm)}\right)^2$
of the centered, stationary Gaussian  
sequence $\{Z^{(m)}_k : k \in \Z \}$ converges to zero as $m \to \infty$,
uniformly on $[-\pi,\pi]$. Namely, 
$\vep_m := \sup_\lm \{ p^{(m)}_Z(\lm) \} \to 0$ as $m \to \infty$.

By stationarity, $\E N_Y^\star (T) = \alpha_0 T$ for
$\alpha_0 := 
\P(Y_0 Y_1 < 0)$.
Our assumption that the spectral measure $\rho_Y$ has a 
continuous density implies that $|r_Y(1)|<1$, hence
the covariance matrix $\bSig$ of $(Y_0,Y_1)$ is positive-definite.
Further, by construction, the covariance matrices $\bSig^{(m)}$ 
of $(W_0^{(m)},W_1^{(m)})$ converge to $\bSig$ when $m \to \infty$
and Lemma~\ref{l:continuity} applies.
In particular, there exist $\xi \in (0,1]$ and 
$m_\star < \infty$ so $\alpha^{(m)}_{3\xi} \le \alpha_0 + \eta$
whenever $m \ge m_\star$. Further, for such $\xi$, 
$m \ge m_\star$ and any $R := \xi/\delta \ge 1$, 
\[
\{Y_k Y_{k+1}<0\} \subseteq \{W^{(m)}_k W^{(m)}_{k+1} <3\xi \} 
\cup \{ |W^{(m)}_k| \ge R \}
\cup \{ |W^{(m)}_{k+1}| \ge R \}
\cup \{ |Z^{(m)}_k| \ge \delta \}
\cup \{ |Z^{(m)}_{k+1}| \ge \delta \}\,.
\]
Thus, the following bound applies for the upper tail of $N_Y^\star(T)$,
\[
\P\big(N_Y^\star(T) - \alpha_0 T \ge 8 \eta T \big)
\le \P\big( N_{m,\xi}(T) - \alpha^{(m)}_{3\xi} T \ge \eta T \big) 
+ 2\P\big(N_{m}^R(T) \ge 2\eta T\big)
+ 2 \P\big(N_{Z^{(m)}} (T) \ge \eta T\big) \,,
\]
where 
\begin{align}\label{def:Nm-xi}
N_{m,\xi} (T)  := \sum_{k=0}^{T-1} \mathbf{1}_{\{W^{(m)}_k W^{(m)}_{k+1} < 3 \xi \}},\quad
N_m^R (T) := \sum_{k=0}^{T-1} \mathbf{1}_{\{|W^{(m)}_k | \ge R \}}, \quad
N_Z (T)  := \sum_{k=0}^{T-1} \mathbf{1}_{\{|Z_k| \ge\delta \}} \,.
\end{align}
The zero-mean, 
$[-1,1]$-valued variables 
$I_k := {\bf 1}_{\{W^{(m)}_{k} W^{(m)}_{k+1} < 3 \xi\}} - \alpha^{(m)}_{3\xi}$ 
are $m$-dependent. Hence, setting $n_T:=\lfloor T/m \rfloor$ we get
by stationarity, followed by Hoeffding's inequality 
for the i.i.d. variables $\{I_{j m}\}_j$, that for $m \ge m_\star$
\begin{align}\label{eq:bd-Nm-xi}
\P\Big( N_{m,\xi} (T) \ge (\alpha^{(m)}_{3\xi} + \eta) T \,\Big)  
 \le m \max_{n \in \{n_T,n_T+1\}} 
 \P\Big( \sum_{j=0}^{n-1} I_{j m} \ge \eta n \Big) \le  
  m e^{-n_T \eta^2/2} \,.
\end{align}
Since $\E [(W^{(m)}_0)^2] \le 1$, fixing $R<\infty$ with 
$\P(|Y_0| \ge R) \le \eta$, we have that
$\widehat{\alpha}^{(m)}_R := \P(|W^{(m)}_0| \ge R) \le \eta$ for all $m \ge 1$.
Hence, by stationarity and the $m$-dependence of the $[-1,1]$-valued 
zero-mean $\widehat{I}_k := {\bf 1}_{\{|W^{(m)}_{k}| > R \}} - \widehat{\alpha}^{(m)}_R$,
applying once more Hoeffding's inequality, we get that 
\begin{align}
\P\Big(N_m^R(T) \ge 2 \eta T \Big) &\le 
\P\Big( \sum_{k=0}^{T-1} \widehat{I}_k \ge \eta T\Big) 
\le m \max_{n \in \{n_T,n_T+1\}} 
\P \Big( \sum_{j=0}^{n-1} \widehat{I}_{j m} \ge \eta n \Big) \le 
m e^{-n_T \eta^2/2} \,.
\label{eq:bd-Nm-R}
\end{align}
Finally, with $\E [ (Z^{(m)}_0)^2 ] \le 2\pi \vep_m$,  
from Markov's inequelity and \cite[identity (7)]{BD} at  
$\theta_m = \vep_m^{-1/2}$, we deduce that for all $T$ large enough 
\begin{equation}\label{eq:bd-Nm-Z}
\P\Big(N_{Z^{(m)}} (T) \ge \eta T \Big) \le 
e^{-\theta_m \delta \eta T} \E \Big[ e^{\theta_m
\sum_{k=0}^{T-1} |Z^{(m)}_k|} \Big]
\le e^{-(\theta_m \delta \eta -27) T} \,. 
\end{equation}
To complete the proof of the upper tail, combine
\eqref{eq:bd-Nm-xi}--\eqref{eq:bd-Nm-Z} taking $m \ge m_\star$ so 
large that $\theta_m \delta \eta \ge 28$.

Turning to prove the lower tail, set $\xi \in (0,1]$ and
$m_\star<\infty$ so $\alpha^{(m)}_{-3\xi} \ge \alpha_0-\eta$ 
whenever $m \ge m_\star$ and note that for any $R=\xi/\delta \ge 1$,   
\[
\{Y_k Y_{k+1} \ge 0\} \subseteq \{W^{(m)}_k W^{(m)}_{k+1} \ge - 3\xi\} 
\cup \{ |W^{(m)}_k| \ge R \}
\cup \{ |W^{(m)}_{k+1}| \ge R \}
\cup \{ |Z^{(m)}_k| \ge \delta \}
\cup \{ |Z^{(m)}_{k+1}| \ge \delta \} \,.
\]
Thus, recalling  from \eqref{def:sign-changes} and \eqref{def:Nm-xi} that 
\[  
N_Y^\star(T) = T -
\sum_{k=0}^{T-1} {\bf 1}_{\{Y_k Y_{k+1} \ge 0\}}  \,,
\qquad 
N_{m,-\xi} (T) = T -
\sum_{k=0}^{T-1} {\bf 1}_{\{W^{(m)}_k W^{(m)}_{k+1} \ge -3 \xi \}} \,,
\]
we have for any $\eta>0$, the bound  
\[
\P\big(\alpha_0 T - N_Y^\star(T) \ge 8 \eta T \big)
\le \P\big( \alpha^{(m)}_{-3\xi} T - N_{m,-\xi} (T) \ge \eta T \,\big) 
+ 2 \P\big(N_m^R (T) \ge 2 \eta T\big)
+ 2 \P\big(N_{Z^{(m)}}(T) \ge \eta T\big) \,.
\]
We have already established exponentially small in $T$ upper bounds on the two left-most terms
(in \eqref{eq:bd-Nm-R} and \eqref{eq:bd-Nm-Z}),  and  
re-running the derivation of \eqref{eq:bd-Nm-xi} for 
$I_k := \alpha_{-3\xi}^{(m)} - {\bf 1}_{\{W_k^{(m)} W_{k+1}^{(m)} < - 3\xi\}}$
yields such a bound on $\P( 
\alpha^{(m)}_{-3\xi} T - N_{m,-\xi} (T) \ge \eta T)$.
\qed

\subsection{Proof of Proposition \ref{p:suff-A}}
(a). Recall that $\int |r(t)|dt < \infty$ for $r(\cdot)$ of 
\eqref{eq:cov-spec} implies that $\rho$ has a 
continuous, bounded density $p(\lm)$. 
Assuming $\rho$ (hence $p(\lm)$) is supported 
on $[-K,K]$, fix
a compactly supported even function $\psi(\cdot)$  
such that $\psi(\lm) \equiv 1$ on $[-K,K]$
and $\sum_{\ell=0}^{2} \|\psi^{(\ell)}\|_\infty \le 1$.
Setting $h_{\ell,y}(\lm):=[\lm \varphi(\lm y)]^\ell \psi(\lm)$ 
and $r_{\ell;\psi}(x;y)$ via \eqref{dfn:ry-ell} but 
with $\psi(\cdot)$ replacing $p(\cdot)$, we find that
\begin{equation}\label{alt:r1-psi}
r'_{1;\psi}(x;y) = \int \sin(\lm x) h_{1,y}(\lm) d\lm 
= \frac{1}{x^2} \int \sin(\lm x) h''_{1,y}(\lm) d\lm 
\end{equation}
(getting the \abbr{rhs} for $x \ne 0$ upon twice integrating by parts).
One easily verifies that
\begin{equation}\label{eq:c-psi}
c_\psi := 2 \max_{\ell=0,1,2} \sup_{\ell |y| < 2 \kappa'} 
\{ \|h''_{\ell,y}\|_1 + \|h_{\ell,y}\|_1 \}  < \infty
\end{equation}
hence $|r'_{1;\psi}(x;2y)| \le c_\psi/(1+x^2)$ for any 
$|y| < \kappa'$ and all $x \in \R$. Since $\psi(\lm) p(\lm) = p(\lm)$, it 
follows that for  $g(x) := c_\psi \int dt |r(t)|/[1+(x-t)^2]$, 
any $|y| < \kappa'$ and $x \in \R$,
\begin{equation}\label{eq:bd-r1y}
|r'_1(x;2y)| = \big|\int r'_{1;\psi}(x-t;2y) r(t) dt\big| \le g(x) \,.   
\end{equation}
Likewise, 
\begin{equation}\label{alt:r2-psi}
- r''_{2;\psi} (x;y) = \int \cos(\lm x) h_{2,y} (\lm) d\lm
= \frac{1}{x^2} \int \cos(\lm x) h''_{2,y} (\lm) d\lm \,,
\end{equation}
hence 
$|r''_{2;\psi}(x;y)| \le c_\psi/(1+x^2)$ for all $|y| < \kappa'$,
yielding similarly to \eqref{eq:bd-r1y} that 
\begin{equation}\label{eq:bd-r2y}
|r''_2(x;y)| =  \big|\int r''_{2;\psi}(x-t;y) r(t) dt \big| \le g(x) \,.
\end{equation}
The same argument shows that 
\begin{equation}\label{eq:bd-r0y}
|r(x)| = \big| \int r_{0;\psi}(x-t) r(t) dt \big| \le g(x) \,.
\end{equation}
Taking $x_\star=1$ we thus find, in view of 
\eqref{eq:bd-r1y} and \eqref{eq:bd-r2y}, that
$\omega_\star(k) \le 3 \sum_{j \ge k} g(j)$
and \eqref{eq:reg-spect-dens} follows from the finiteness of 
\[
\sum_{j=1}^\infty g(j) = c_\psi \int dt |r(t)| \Big[ 
\sum_{j=1}^\infty \frac{1}{1+(j-t)^2} \Big] \le 2 c_\psi 
\Big[ \sum_{j \ge 0} \frac{1}{1+j^2} \Big] \int |r(t)| dt \,.
\]
(b). If $\rho(\cdot)$ of unbounded support satisfies \eqref{eq:exp-mom-nd},
then the covariance $r(\cdot)$ of \eqref{eq:cov-spec} is 
real-analytic and a.s. the sample path $t \mapsto X(t)$ is in $C^\infty(\R)$.
Suppose also that
for some $\kappa_o \in (0,\kappa/2)$ the covariance $r(\cdot;\kappa_o)$ 
of \eqref{dfn:r-kappa-o} is integrable. The latter implies that the 
measure $\cosh(2\kappa_o \lm) d\rho(\lm)$
has a continuous, bounded density $p_{\kappa_o}(\lm)$, hence
$\rho(\cdot)$ has the continuous, bounded density 
$p(\lm)=\psi(\lm) p_{\kappa_o}(\lm)$
for the even, $(0,1]$-valued, integrable function 
$\psi(\lm):=1/\cosh(2\kappa_o \lm)$.  
It is easy to verify that \eqref{eq:c-psi} remains valid for 
such choice of $\psi(\lm)$, provided $\kappa'<\kappa_o$. Further, 
in this case $|h_{\ell,y}(\lm)| \to 0$ and $|h'_{\ell,y}(\lm)| \to 0$ as
$|\lm| \to \infty$, whenever $\ell |y| < 2 \kappa'$, justifying the 
integration by parts that lead to the right-most equality in both
\eqref{alt:r1-psi} and  \eqref{alt:r2-psi}.  
The convolution identities
\eqref{eq:bd-r1y}, \eqref{eq:bd-r2y} and \eqref{eq:bd-r0y} apply
upon replacing $r(t)$ by $r(t;\kappa_o)$,
as do the corresponding bounds, albeit with 
$g(x):=c_\psi \int dt |r(t;\kappa_o)|/[1+(x-t)^2]$,
so the integrability of $|r(\cdot;\kappa_o)|$ indeed 
suffices for \eqref{eq:reg-spect-dens}.
\qed

\section{Analytic Extension, Jensen's formula and de-correlation}\label{sec:main}

\subsection{An analytic extension and its properties}
Under \eqref{eq:exp-mom-nd} the covariance kernel
$r:\R\to\R$ of the process $X(t)$ analytically extends 
to the strip $\S_{\kappa}=\{z \in \mathbb{C} : \ |\text{Im}(z)|<\kappa\}$, 
by plugging $t=z$ in \eqref{eq:cov-spec}. Utilizing this fact, we next 
construct a complex analytic, mean zero, 
Gaussian function $f : \S = \S_{\kappa/2} \to \C$ which is at the center
of our proof of Theorem \ref{t:upper}.
\begin{proposition}
\label{t:ext} 
For a real, stationary mean-zero, Gaussian process $X$ that 
satisfies \eqref{eq:exp-mom-nd}  
there exist a complex analytic, zero mean, Gaussian 
$f: \S := \S_{\kappa/2} \to \C$ such that:
\newline
(a) The function $f(\cdot)$ is conjugation equivariant, namely $f(\bar{z})=\overline{f(z)}$.
\newline
(b) The covariances of $f(\cdot)$ are given by the formulae
\begin{equation}
\label{p:corrf}
K(z,w) := \E [f(z)\overline{f(w)}]= r(z-\bar{w}); \qquad 
\E [f(z)f(w)]=r(z-w) \qquad \forall z,w \in \S \,.
\end{equation}
(c) The law of $z \mapsto f(z)$ is stationary under real translations
and $\{f(t+i0)\}_{t\in \R} \stackrel{d}{=} \{X(t)\}_{t\in \R}$.
\end{proposition}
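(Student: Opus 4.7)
The plan is to construct $f$ directly from the spectral representation of $X$ and then verify the three properties in turn. Let $Z$ denote the complex Gaussian orthogonal random measure on $\R$ with control measure $\rho$ and Hermitian symmetry $Z(-A) = \overline{Z(A)}$, so that $X(t)=\int_\R e^{-i\lambda t}\, dZ(\lambda)$ is the spectral representation of $X$. For $z = x+iy$ with $|y|<\kappa/2$, I would set
\begin{equation*}
f(z) := \int_\R e^{-i\lambda z}\, dZ(\lambda) \,.
\end{equation*}
This integral is well-defined in $L^2$, since $\E|f(z)|^2 = \int e^{2\lambda y}\, d\rho(\lambda) \le \int e^{|\lambda|\kappa}\, d\rho(\lambda) < \infty$ by \eqref{eq:exp-mom-nd}, as $2|y|<\kappa$.

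Properties (a), (b) and (c) then follow from the It\^o-type isometry for $Z$ together with its Hermitian symmetry. For (b) one computes
\begin{equation*}
\E[f(z)\overline{f(w)}] = \int e^{-i\lambda z} e^{+i\lambda\bar w}\, d\rho(\lambda) = r(z-\bar w),
\end{equation*}
while the Hermitian symmetry of $Z$ gives $\E[Z(A)Z(B)] = \rho(A \cap (-B))$, which yields $\E[f(z)f(w)] = \int e^{-i\lambda(z-w)}\, d\rho(\lambda) = r(z-w)$. Property (a) is immediate from the same symmetry: $\overline{f(z)} = \int e^{i\lambda \bar z}\, d\overline{Z}(\lambda) = \int e^{-i\lambda \bar z}\, dZ(\lambda) = f(\bar z)$. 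For (c), stationarity under real shifts is inherited from the spectral representation (since $e^{-i\lambda s}\, dZ(\lambda)$ has the same law as $dZ(\lambda)$), and the identification $\{f(t+i0)\}_{t\in\R} \stackrel{d}{=} \{X(t)\}_{t\in\R}$ is built into the definition of $f$.

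The real work lies in producing a modification of $f$ that is almost surely analytic on all of $\S$. I would fix a countable dense set $\{t_j\}\subset \R$ (say the rationals) and form the Gaussian Taylor coefficients
\begin{equation*}
f^{(n)}(t_j) := \int_\R (-i\lambda)^n e^{-i\lambda t_j}\, dZ(\lambda),
\end{equation*}
which satisfy $\E|f^{(n)}(t_j)|^2 \le \int \lambda^{2n}\, d\rho(\lambda) \le C (2n)!\,\kappa^{-2n}$ using the elementary bound $|\lambda|^{2n} \le (2n)!\,\kappa^{-2n} e^{|\lambda|\kappa}$ and \eqref{eq:exp-mom-nd}. A standard Gaussian tail estimate together with Borel--Cantelli then yields, almost surely, $|f^{(n)}(t_j)| \le n\sqrt{C(2n)!}\,\kappa^{-n}$ for all $n$ large and every $j$. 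Since Stirling gives $\sqrt{(2n)!}/n! = O(2^n)$, each random Taylor series $g_j(z) := \sum_n f^{(n)}(t_j)(z-t_j)^n/n!$ has radius of convergence at least $\kappa/2$. A separate dominated-convergence argument inside $L^2(dZ)$ shows that $g_j(z) = f(z)$ in $L^2$, hence almost surely (both are Gaussian of the same covariance), for every $z$ in the disk $B(t_j, \kappa/2)$. A countable union over $j$ then yields a full-probability event on which $f$ agrees with an analytic function on each such disk, whose union covers $\S$.

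The main obstacle I anticipate is the last step: passing from mean-square analyticity (which is essentially tautological from the spectral representation) to simultaneous almost-sure analyticity on a full open strip. The Taylor-series approach handles this cleanly, but requires care to identify the random series at each base point $t_j$ with $f(z)$ on a common event of full probability, and relies crucially on \eqref{eq:exp-mom-nd} to bound the coefficients $f^{(n)}(t_j)$ at the right rate.
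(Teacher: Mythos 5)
Your construction is correct and arrives at the same object as the paper, but takes a genuinely different route. The paper fixes an orthonormal basis $\{\varphi_n\}$ of Hermitian functions in $L^2_\rho(\R)$, sets $\psi_n(z) = \int e^{-i\lambda z}\varphi_n(\lambda)\,d\rho(\lambda)$, and defines $f(z) = \sum_n \zeta_n\psi_n(z)$ with i.i.d.\ real standard Gaussians $\zeta_n$; this puts $f$ squarely in the framework of random Gaussian Taylor/function series, for which almost-sure convergence to an analytic function on $\S_{\kappa/2}$ is a quotable black box (Kahane, or the GAF book Lemma 2.2.3) once the covariance $K(z,w)=\sum_n\psi_n(z)\overline{\psi_n(w)}=r(z-\bar w)$ is locally bounded. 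You instead work directly with the spectral stochastic integral $f(z)=\int e^{-i\lambda z}\,dZ(\lambda)$. The two constructions are unitarily equivalent (the ONB expansion is one realization of the random measure), and your covariance computations via the It\^o isometry and Hermitian symmetry are the same as the paper's, just written in integral rather than series form. What the series route buys is that the step you correctly flag as ``the real work'' --- upgrading mean-square analyticity of the stochastic integral to almost-sure analyticity of a modification on the open strip --- is absorbed by the cited result. You have to rebuild it by hand with Taylor coefficients at a dense set of base points plus Borel--Cantelli, which does work (your coefficient bound $\int\lambda^{2n}\,d\rho \le (2n)!\,\kappa^{-2n}\int e^{|\lambda|\kappa}\,d\rho$ and the Stirling estimate give exactly radius $\ge\kappa/2$). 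Two small points of precision in that step: the Borel--Cantelli threshold $N$ depends on $j$ (so ``for all $n$ large and every $j$'' should be read as ``for each $j$, for all $n\ge N_j(\omega)$''), which is harmless since the radius-of-convergence conclusion is per-$j$; and to get $g_j = f$ on all of $B(t_j,\kappa/2)$, rather than at each fixed $z$, you should first match on a countable dense subset of the disk and then invoke continuity of $g_j$ (and of $g_{j'}$ for the overlap consistency) --- $f$ itself has no canonical pointwise version, so the modification is $\bigcup_j g_j$ glued on overlaps.
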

\begin{proof} 
As $\rho$ an even real-valued measure,
there exists an orthonormal basis
(\abbr{ONB}) for $\mathcal{L}^2_\rho(\R)$ 
composed of Hermitian functions $\{\p_n\}$ 
(ie. with $\p_n(-\lm)=\overline{\p_n(\lm)}$). 
For such a basis and $e_z(\lambda) := e^{i \lm \bar z}$, $z \in \S_{\kappa/2}$  let 
\begin{equation}\label{eq:def-psi-n}
\psi_n(z) := \langle \p_n , e_z \rangle_{\mathcal{L}^2_\rho(\R)}
 = \int_\R \p_n(\lm) \overline{e_z(\lm)} d \rho(\lm) = \int_\R e^{-i \lm z} \p_n(\lm) d \rho(\lm) \,,
\end{equation}
and for i.i.d.\ coefficients $\zeta_n \sim \calN_\R(0,1)$ consider the 
random series
\[
f(z) := \sum_n \zeta_n \psi_n(z) \,.
\]
Having \eqref{eq:exp-mom-nd} hold for $\kappa$,  
standard arguments (see \cite[Chapter 3, Thm. 2]{Kah} or \cite[Lemma 2.2.3]{GAFbook}) yield that the series defining $f(\cdot)$ converges 
almost surely to a zero-mean, complex analytic Gaussian function
on $\S=\S_{\kappa/2}$, having there the covariance
\[
K(z,w)= \E [f(z)\overline{f(w)}] = \sum_n \psi_n(z)\overline{\psi_n(w)} \,.
\]
Since $\{\p_n\}$ are Hermitian and $\rho$ is even and real-valued,
it follows that $\overline{\psi_n(z)} = \psi_n(\bar z)$ and we get part (a) upon taking the conjugate in the defining series for $f(z)$.
Further, since $\{\p_n\}$ is an \abbr{ONB} 
in $\mathcal{L}^2_\rho(\R)$  
\[
K(z,w) =\sum_n\langle\p_n,e_z\rangle_{\mathcal{L}^2_\rho(\R)} 
\langle e_w,\p_n \rangle_{\mathcal{L}^2_\rho(\R)}=
\langle e_w,e_z \rangle_{\mathcal{L}^2_\rho(\R)} 
= r(z-\bar w)\,,
\]
as stated in \eqref{p:corrf} (and the \abbr{rhs} of 
\eqref{p:corrf} then follows from part (a)). 
The formulas \eqref{p:corrf} are invariant 
to real shifts $(z,w) \mapsto (z + t,w+t)$, $t \in \R$ hence the 
Gaussian function $f(\cdot)$ is stationary with respect to such real shifts.
To complete the proof of part (c), note that by part (a) 
the function $f(z)$ is real-valued when $z\in \R$ and the covariance kernel 
of \eqref{p:corrf} coincides for $z,w \in \R$ with the original covariance
$r:\R\to\R$ of the given real Gaussian process $X$.
\end{proof}
\begin{remark} 
{\rm Recall that
$\Re f(z) = [f(z) +\overline{f(z)}]/2$, $\Im f(z) = [f(z) - \overline{f(z)}]/(2i)$ with~\eqref{p:corrf} determining the covariance between the 
real and imaginary parts of $f(z)$ and $f(w)$, $z,w \in \S$. 
By Proposition \ref{t:ext}(c), when $\Im(z)=\Im(w)$ 
the latter depend only on $w-z$ so \abbr{wlog} we may set
$\Re(z)=0$. Specifically, for $|y| < \kappa/2$ and $x \in \R$ 
we have
\begin{align}\label{formula1}
\E [ \Re (f(iy)) \Re (f(x+iy)) ] &=\frac{1}{2} [\Re (r(x+2iy))+r(x)] 
= \int_{\R} \cos(\lm x) \cosh^2 (\lm y) d\rho(\lm)
\,, \\
\label{formula2}
\E [ \Im (f(iy)) \Im (f(x+iy)) ] &=\frac{1}{2} [\Re (r(x+2iy))-r(x)] 
= y^2 \int_{\R} \cos(\lm x) \lm^2 \varphi^2 (\lm y) d\rho(\lm)
\,, \\
\E [ \Re (f(iy)) \Im (f(x+iy)) ] &= \frac{1}{2}\Im (r(x+2iy)) 
\qquad \quad \; = - y \int_{\R} \sin(\lm x) \lm \varphi(2 \lm y)  d\rho(\lm) \,,
\label{formula3}
\end{align}
where $\varphi(\lm):=\sinh(\lm)/\lm$ 
and the 
\abbr{rhs} of \eqref{formula1}--\eqref{formula3} follows 
from \eqref{eq:cov-spec} and the even symmetry of the 
spectral measure $\rho(\cdot)$.}
\end{remark} 

We next utilize \eqref{formula1}--\eqref{formula3} to deduce from Assumption A
the absolute summability of the corresponding correlations,
uniformly in $\S_{\kappa'}$.
\begin{lemma}
\label{l:decay}
For $f(\cdot)$ of Proposition \ref{t:ext}, 
consider the vector $\bro (z) \in [-1,1]^4$ 
of correlations 
between $[\Re(f(z)), \Im(f(z))]$ and $[\Re(f(iy)),\Im(f(iy))]$ 
when $y=\Im(z)$. Then, for 
$x_\star$ and $0<\kappa'<\kappa/2$ of Assumption A, 
\begin{equation}
\label{eq:summable-corr}
\omega(k) := 4 \sup_{|y| < \kappa'} \big\{ 
\sum_{j=k}^{\infty} \|\bro (j x_\star + i y)\| \big\} \to 0 \quad
\mbox{ for } \;\; k \to \infty 
\end{equation}
and in particular 
\begin{equation}\label{eq:summable-cov}
\sup_{|y| < \kappa'} \Big\{  \sum_{j \in \Z} |r(j x_\star + 2iy)| \Big\} 
< \infty \,.
\end{equation}
\end{lemma}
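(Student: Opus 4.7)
The plan is to express each of the four covariances comprising $\bro(jx_\star+iy)$ in terms of the quantities $r$, $r'_1(\cdot;2y)$ and $r''_2(\cdot;y)$ that already appear in Assumption A, to normalize by variances bounded away from zero uniformly in $|y|<\kappa'$, and then to invoke \eqref{eq:reg-spect-dens} directly.

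First, starting from \eqref{formula1}--\eqref{formula3} and using the elementary identities $\cosh^2 u = \tfrac{1}{2}(1+\cosh 2u)$, $\cosh(2u) = 1+2u^2\varphi^2(u)$ and $\sinh(2u) = 2u\,\varphi(2u)$, one verifies via the definitions \eqref{dfn:ry-ell} that
\begin{align*}
\E[\Re f(iy)\,\Re f(x+iy)] &= r(x) - y^2\, r''_2(x;y),\\
\E[\Im f(iy)\,\Im f(x+iy)] &= -y^2\, r''_2(x;y),\\
\E[\Re f(iy)\,\Im f(x+iy)] &= y\, r'_1(x;2y),
\end{align*}
with the fourth cross-covariance handled by an analogous computation. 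Combining these identities, or equivalently computing $r(x+2iy) = \int e^{-i\lm x}e^{2\lm y}\,d\rho(\lm)$ directly using the even symmetry of $\rho$, also yields the analytic decomposition
\[
r(x+2iy) = r(x) - 2y^2\, r''_2(x;y) + 2iy\, r'_1(x;2y),
\]
so that $|r(jx_\star+2iy)| \le |r(jx_\star)| + 2(\kappa')^2 |r''_2(jx_\star;y)| + 2\kappa'|r'_1(jx_\star;2y)|$ for $|y|<\kappa'$. Summing over $j\in\Z$ (the $j<0$ terms reduce to $j>0$ with $y\mapsto -y$, and the suprema in \eqref{eq:reg-spect-dens} are symmetric in $y$) and applying \eqref{eq:reg-spect-dens} then yields \eqref{eq:summable-cov}.

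Second, to obtain \eqref{eq:summable-corr} I normalize by the diagonal variances. Since $\cosh\ge 1$, \eqref{formula1} at $x=0$ gives $\var\,\Re f(iy)\ge 1$ uniformly; since $\varphi(u)\ge 1$ for all real $u$, \eqref{formula2} at $x=0$ gives $\var\,\Im f(iy)\ge y^2\int\lm^2 d\rho$, which is a \emph{strictly} positive multiple of $y^2$ because the non-atomic $\rho$ cannot equal $\delta_0$. The crucial point is that the factors of $y$ and $y^2$ appearing in the three covariances above are exactly what is needed so that each of the four entries of $\bro(jx_\star+iy)$ is bounded in modulus, uniformly in $|y|<\kappa'$, by an absolute constant multiple of $|r(jx_\star)| + |r'_1(jx_\star;2y)| + |r''_2(jx_\star;y)|$. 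Summing over $j\ge k$, taking the supremum over $|y|<\kappa'$, and applying \eqref{eq:reg-spect-dens} then gives $\omega(k)\le C\,\omega_\star(k)\to 0$.

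The main (and mild) obstacle is the algebraic bookkeeping in the first step: verifying that the $y$-dependence of each covariance matches the $y$-dependence of its variance normalizer, so that these powers of $y$ cancel cleanly and the bounds on the correlations remain uniform down to $y=0$. Once these cancellations are made explicit, Assumption A plugs in immediately to give both \eqref{eq:summable-corr} and \eqref{eq:summable-cov}.
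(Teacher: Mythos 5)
Your proposal is correct and follows essentially the same strategy as the paper: express the covariances via \eqref{formula1}--\eqref{formula3}, recognize that they are controlled by $r$, $r'_1(\cdot;2y)$ and $r''_2(\cdot;y)$, normalize by the variances $\vR(y)$ and $y^2\vI(y)$ (which are uniformly bounded below, with the powers of $y$ cancelling exactly), and then feed the resulting bound $\|\bro(jx_\star+iy)\|\lesssim |r(jx_\star)|+|r'_1(jx_\star;2y)|+|r''_2(jx_\star;y)|$ into \eqref{eq:reg-spect-dens}. The only (minor) deviation from the paper is in how \eqref{eq:summable-cov} is obtained: the paper gets it as a by-product of \eqref{eq:summable-corr} via the inequality $|r(x+2iy)|=|\E[\overline{f(iy)}f(x+iy)]|\le r(2iy)\,\|\bro(|x|+iy)\|_1$, while you derive the decomposition $r(x+2iy)=r(x)-2y^2\,r''_2(x;y)+2iy\,r'_1(x;2y)$ and bound it directly by $\omega_\star$; both are correct and equally short. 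Your explicit algebraic identities for the three covariances are the same ones encoded implicitly in the paper's relations \eqref{eq:II-dom-RR}--\eqref{eq:rho-RI} (indeed $\beta_y=1/\vR(y)$ and $1-\beta_y=y^2\vI(y)/\vR(y)$ there). One small remark: the parenthetical about $j<0$ terms reducing to $j>0$ via $y\mapsto -y$ is unnecessary and slightly misleading; the cleaner observation is that $|r(x)|$, $|r'_1(x;2y)|$ and $|r''_2(x;y)|$ are all even in $x$, so the two-sided sum is just twice the one-sided sum plus the $j=0$ term.
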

\begin{proof} In view of \eqref{formula1} and \eqref{formula2}, for any 
$|y| < \kappa/2$ and $x \in \R$,
\begin{align}\label{dfn:vI}
\vI (y) := y^{-2} \var(\Im(f(x+iy)) & = \int_{\R} \lm^2 
\varphi^2(\lm y) d\rho(\lm) \,, \\
\vR (y) :=  \var(\Re(f(x+iy))  & = \int_{\R} \cosh^2(\lm y) d\rho(\lm) \,,
\label{dfn:vR}
\end{align}
are uniformly in $y$, bounded away from zero  
and thanks to \eqref{eq:exp-mom-nd}, 
\[
\var(\Im(f(x+iy))  
\le \var(\Re(f(x+iy))  
\le r(2iy)
\]
is uniformly bounded over $|y| \le \kappa' < \kappa/2$.
Further, by parts (b) and (c) of Proposition \ref{t:ext},
\[
|r(x + 2iy)|  = |\E [ \overline{f(iy)} f(x + iy) ]| 
\le r(2iy)\, \| \bro (|x|+iy) \|_1 \,.
\]
Thus, if \eqref{eq:summable-corr} holds, then necessarily 
$\omega(1) 
$
is finite and 
\eqref{eq:summable-cov} must hold as well.
Turning to show \eqref{eq:summable-corr}, we have from 
\eqref{formula1}--\eqref{formula3}, that the coordinates of the vector 
$\bro(x+iy)$ satisfy the relations
\begin{align}\label{eq:II-dom-RR}
\bro_{\textsf{RR}}(x+iy) =  (1-\beta_y) \, \bro_{\textsf{II}}(x+iy) 
+ \beta_y r(x) 
\,, \qquad 
\bro_{\textsf{RI}}(x+iy) &= - \bro_{\textsf{IR}}(x+iy) \,,\\
\vI (y)\,
\bro_{\textsf{II}}(x+iy) 
= - r''_2(x;y) \,, 
\qquad \sqrt{\vR (y) \vI (y)} \,
\bro_{\textsf{IR}}(x+iy) 
& = r_1'(x;2y) \,,
\label{eq:rho-RI}
\end{align}
for $\beta_y := 2r(0)/(r(2iy)+r(0)) \in (0,1)$ and the 
$x$-derivatives of
$r_{\ell}(\cdot)$ of \eqref{dfn:ry-ell}. Recalling that
$\inf_y \{ \vI (y) \wedge \vR (y) \} \ge c^{-1}$
we deduce from \eqref{eq:II-dom-RR}--\eqref{eq:rho-RI} that 
\[
\|\bro (j x_\star + i y)\| \le r(j x_\star) 
+ 2 c |r''_2(j x_\star;y)| + 2 c |r_1'(j x_\star;2y)| \,,
\]
hence $\omega(k) \le \textcolor{red}{8} (c \vee 1) \omega_\star(k)$ and \eqref{eq:summable-corr} 
follows from our assumption \eqref{eq:reg-spect-dens}.
\end{proof}

\subsection{Relating real and complex zeroes}
Thanks to the second part of Proposition \ref{t:ext}(c), 
for any $\D \subseteq \S$ containing $[0,T]$ we have
\begin{equation}\label{eq:anal-ubd} 
N_X([0,T]) \le N_f(\D) = | \{ z \in \D : f(z)=0 \} | \,. 
\end{equation}
For $\kappa'$ 
of Assumption A and $\delta \in (0,\kappa'/2)$,
let $\B_j(r)$ denote the ball of radius $r$ centered at 
$x_j:= (2 j - 1) \delta$. We shall use the 
bound \eqref{eq:anal-ubd} with the disjoint union of 
$n := \lceil T/(2\delta) \rceil$ balls
\[
\D = \D_{n,\delta} := \bigcup_{j=1}^{n} \B_j(\delta) \,,
\] 
further estimating the value of $N_f(\B_j(\delta))$ 
by Jensen's 
enumeration formula for the zeroes of a complex analytic function (see \cite[Section 5.3.1]{Ahlfors}). 
Specifically, for $\beta \in [0,\log 2]$ define the integral
\begin{equation}\label{dfn:Ij}
\Gamma_j(\beta) :=  \int_{-1/2}^{1/2} 
\log |f(x_j+\delta e^{\beta} e^{i 2\pi \theta})| d \theta \,.
\end{equation}
With such choices $\delta e^\beta < \kappa' < \kappa/2$, so 
$\B_j(\delta e^\beta) \subset \S$ and
Jensen's formula tells us that for each~$j$ 
\begin{equation}\label{eq:jensen}
 \int_{\delta}^{\delta e^{\beta}} N_f(\B_j(r)) \frac{\mathrm{d}r}{r} 
 =  \Gamma_j(\beta) - \Gamma_j(0).
\end{equation}
Since $r \mapsto N_f(\B_j(r))$ is non-decreasing, 
from \eqref{eq:jensen} we 
deduce that 
\begin{equation}\label{eq:bds}
N_f( \B_j(\delta) ) \leq \frac{1}{\beta} \big[ \Gamma_j(\beta) - \Gamma_j(0) \big] \leq N_f(  \B_j (\delta e^{\beta})  ).
\end{equation}
Sum \eqref{eq:bds} over $j$ to get
\begin{equation}\label{eq:basic-bd}
N_f(\D_{n,\delta})  \le \frac{1}{\beta} \sum_{j=1}^{n} \widehat{\Gamma}_j(\beta)   
- \frac{1}{\beta} \sum_{j=1}^{n} \widehat{\Gamma}_j(0) + 
\sum_{j=1}^{n} \E [ N_f(\B_j  (\delta e^{\beta})  ) ]  \,,
\end{equation}
where $\widehat{\Gamma}_j(\cdot) := \Gamma_j(\cdot) - \E \Gamma_j(\cdot)$.
The next lemma shows that for small positive $\delta$ and $\beta$ 
the right-most 
(non-random)
term in \eqref{eq:basic-bd} is at most 
$(\alpha + \eta)T$.

\begin{lemma}\label{l:approx}
Suppose that \eqref{eq:exp-mom-nd} holds and the spectral measure 
$\rho(\cdot)$ is non-atomic. There exist  
$\delta_\star(\eta)$ and $\beta_\star(\eta)$ positive, such that
for any $\delta \le \delta_\star(\eta)$, 
$\beta \le \beta_\star(\eta)$ 
and all $T \ge 1$,  
\begin{equation}\label{eq:mean-Nf-balls}
\frac{1}{T} \sum_{j=1}^n \E [N_f( \B_j(\delta e^{\beta})  )] \leq 
\alpha + \eta \,.
\end{equation}
\end{lemma}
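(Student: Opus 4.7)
The plan is to split each $N_f(\B_j(R))$, with $R := \delta e^\beta$, into zeroes on the real axis and zeroes in the open upper half-disc. Since $f$ is conjugation equivariant (Proposition \ref{t:ext}(a)), off-axis zeroes come in complex-conjugate pairs, so
\[
N_f(\B_j(R)) = N_X([x_j - R, x_j + R]) + 2\, N_f^+(\B_j(R)),
\]
where $N_f^+$ counts zeroes of $f$ with $\Im z > 0$. By Proposition \ref{t:ext}(c) and \eqref{eq:Kac-Rice}, $\E N_X([x_j - R, x_j + R]) = 2\alpha R$, so summing over $j = 1, \ldots, n = \lceil T/(2\delta) \rceil$ contributes $T\alpha e^\beta + O(\delta)$ to $\sum_j \E N_f(\B_j(R))$. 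After dividing by $T \ge 1$ this is at most $\alpha + \eta/2$ once $\beta \le \beta_\star(\eta)$ is small enough and $\delta$ is small.

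For the second term I invoke the two-dimensional Kac--Rice formula for complex zeroes of the Gaussian analytic function $f$, which is valid on the open upper half-strip $\{\Im z > 0\}$ where the Gaussian vector $(\Re f(z), \Im f(z))$ is non-degenerate. By the $x$-stationarity of Proposition \ref{t:ext}(c), the intensity depends only on $y = \Im z$ and equals
\[
\rho(y) \;=\; p_{f(iy)}(0)\cdot \E\bigl[|f'(iy)|^2 \bigm| f(iy) = 0\bigr],
\]
with $p_{f(iy)}(0) = \bigl(2\pi y \sqrt{\vR(y)\,\vI(y)}\bigr)^{-1}$, because \eqref{formula3} at $x=0$ shows $\Re f(iy)$ and $\Im f(iy)$ are independent Gaussians with variances $\vR(y)$ and $y^2 \vI(y)$ respectively. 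This prefactor diverges like $y^{-1}$ as $y \downarrow 0$, and the heart of the argument is to show that the conditional second moment compensates: $\E[|f'(iy)|^2 \mid f(iy)=0] = O(y^2)$ near $y=0$, which yields $\rho(y) = O(y)$.

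The heuristic is transparent: $\Re f(iy) \approx X(0)$ and $\Im f(iy) \approx y\, X'(0)$ to leading order in $y$, so conditioning on $f(iy)=0$ is essentially conditioning on the \emph{double-zero} event $X(0) = X'(0) = 0$, under which $f'(iy) \approx i y\, X''(0)$ gives $|f'(iy)|^2 = O(y^2)$. To make this rigorous I use the Cauchy--Riemann relations $\Re f' = \partial_y \Im f$, $\Im f' = -\partial_y \Re f$ to express the cross-covariances between $f'(iy)$ and $f(iy)$ as $y$-derivatives of \eqref{formula1}--\eqref{formula2}, then compute via the Schur complement the conditional covariance of $(\Re f'(iy), \Im f'(iy))$ given $f(iy) = 0$. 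Using the identity $\varphi(2\lm y) = \cosh(\lm y)\, \varphi(\lm y)$ together with Taylor expansion in $y$, both conditional variances come out $O(y^2)$, with leading coefficient on the $\Im f'$-block proportional to the Cauchy--Schwarz gap $\int \lm^4\, d\rho - \bigl(\int \lm^2\, d\rho\bigr)^2 \ge 0$. Carrying out this Schur-complement cancellation is the main technical obstacle.

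Once $\rho(y) \le Cy$ on some $[0, y_0]$, we have $\E N_f^+(\B_j(R)) \le 2R \int_0^R \rho(y)\, dy \le CR^3$ for $R \le y_0$. Summing $2\,\E N_f^+(\B_j(R))$ over $j$ and dividing by $T \ge 1$ yields an error of order $\delta^2 e^{3\beta}$; shrinking $\delta \le \delta_\star(\eta)$ makes this smaller than $\eta/2$, and combining with the real-axis contribution establishes \eqref{eq:mean-Nf-balls}.
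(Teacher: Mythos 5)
Your proposal is correct in outline, but it takes a genuinely different and considerably more computational route than the paper. The paper simply cites \cite[Theorem~1]{Nmean}: for a GAF with non-atomic spectral measure and stationary under real shifts, $\E N_f([0,1]\times[-r,r]) = \alpha + \mu_f([-r,r])$ where $\mu_f$ is an \emph{absolutely continuous} measure on $\R$. Combined with the crude bound $\E N_f(\B_j(\delta e^\beta)) \le \E N_f([-r,r]^2) = 2r(\alpha + \mu_f([-r,r]))$ and the covering estimate $n \le T/(2\delta) + 1$, the left side of \eqref{eq:mean-Nf-balls} is at most $h(\delta,\beta) := (1+2\delta)e^\beta(\alpha + \mu_f([-\delta e^\beta, \delta e^\beta]))$, and absolute continuity of $\mu_f$ (so $\mu_f(\{0\})=0$) gives $h(0,0)=\alpha$ and continuity; done in five lines. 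You instead re-derive a quantitatively stronger version of this statement from scratch via the two-dimensional Kac--Rice formula, showing the first intensity satisfies $\rho_1(iy)=O(y)$ as $y\to 0$, hence $\mu_f([-R,R])=O(R^2)$. Your sketch of the Schur-complement cancellation does go through: one gets, with $m_k := \int\lm^k d\rho$, that $\Var(\Im f'(iy)\mid f(iy)=0)\sim y^2(m_4-m_2^2)$ while in fact $\Var(\Re f'(iy)\mid f(iy)=0)=O(y^4)$, and since $p_{f(iy)}(0)\sim (2\pi |y|\sqrt{m_2})^{-1}$ one obtains $\rho_1(iy)\sim |y|(m_4-m_2^2)/(2\pi\sqrt{m_2})$; the fourth moment $m_4$ is finite under \eqref{eq:exp-mom-nd}. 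So your approach is sound, is self-contained (avoiding the external reference), and yields a more precise rate --- but it proves strictly more than the lemma needs, at the cost of the explicit Taylor/Schur computation you acknowledge as the main technical obstacle, whereas the paper deliberately sidesteps that entirely by invoking the known structural result on the mean zero measure.
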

\begin{proof} 
Since the Gaussian function $f(z)$ has non-atomic spectral measure,
\[
\E [ N_f ([0,1] \times [-r,r]) ] = \alpha + \mu_f([-r,r]) \,, 
\qquad \forall r \ge 0 \,,
\]
where $\mu_f(\cdot)$ is some absolutely continuous, non-negative 
measure on $\R$
(see \cite[Theorem 1]{Nmean}). Further, $z \mapsto f(z)$ is stationary
under real translations,
hence for any 
$x_j \in \R$ and $r \in [\delta e^\beta,\tfrac 1 2] \cap \Q$
\[
\E [N_f( \B_j(\delta e^{\beta}) )] \le \E[ N_f([-r,r]^2) ] 
= 2r (\alpha + \mu_f([-r,r])) \,.
\]
With $n \le \frac{T}{2\delta} +1$, 
the \abbr{lhs} of \eqref{eq:mean-Nf-balls} is thus for $T \ge 1$ and
$\delta < \tfrac{1}{4}$, at most
\begin{equation*}
h(\delta,\beta) := 
(1 + 2 \delta) e^\beta \big(\alpha + \mu_f([-\delta e^\beta,\delta e^\beta])
\big) 
\end{equation*}
and we are done, since $h(\cdot,\cdot)$ is continuous with $h(0,0)=\alpha$.
\end{proof}

\subsection{Reducing Theorem \ref{t:upper} to the decorrelation of moments.}
Fixing $\eta>0$, in view of \eqref{eq:anal-ubd} 
and \eqref{eq:basic-bd} it suffices 
for \eqref{eq:upper-tail} to show that for $\beta=\beta^\star$
and $\delta^\star$ as in Lemma \ref{l:approx}, there exist 
$\delta \in (0,\delta^\star]$
and $c=c(\eta,\beta,\delta)>0$ so that for all $n$ large enough
\begin{equation}\label{eq:fluct}
\P\Big ( \sum_{j=1}^n \widehat{\Gamma}_j(\beta) \geq \eta \beta \delta n \Big) 
+ \P\Big ( \sum_{j=1}^n \widehat{\Gamma}_j(0)  \leq -\eta \beta \delta n \Big) 
\leq e^{-c n} \,.
\end{equation}
To this end, let $x_\star$ be as in Assumption A and 
consider $\delta \in (0,\delta^\star]$ 
such that $x_\star/(2 \delta) := \ell_\star \in \N$.
Then, to utilize the decay of correlations in Lemma 
\ref{l:decay}, fix $\ell = k \ell_\star$ for some $k \in \N$ and 
cover $\{1,\ldots,n\}$ by the disjoint union of $\ell$ sets 
$\cS_\tau :=  \{\ell - \tau, 2\ell -\tau,\ldots,m\ell-\tau\}$
(namely $\tau=0,\ldots,\ell-1$), with $m=\lceil n/\ell \rceil \ge 2$. 
By stationarity
of $f(\cdot)$ under real translations,  
the law of 
$\sum_{j \in \cS_\tau} \widehat{\Gamma}_j (\cdot)$
is independent of $\tau$. 
Setting $\xi:=\eta \beta/10$,
by a union bound on
the $\ell$ choices of $\tau$, it suffices for
\eqref{eq:fluct} to show that 
some $c=c(\xi,\delta,\ell)>0$ and all $m$ large enough
\begin{equation}\label{eq:fluct-k}
\P\Big ( \sum_{j \in \cS_0} \widehat{\Gamma}_j(\beta) 
\geq 5 \xi \delta m \Big) 
+ \P\Big ( \sum_{j \in \cS_0} \widehat{\Gamma}_j(0)  \leq -5 \xi \delta m \Big) 
\leq  2e^{-2 c \ell m} \,.
\end{equation}
A standard application of the exponential Markov inequality reduces
this task 
for $c = \vep \xi \delta/(2 \ell)$,
into showing that for some $\vep=\vep(\xi,\delta,\ell)>0$ 
and all large enough $m$,  
\begin{equation}
\E \Big[ \exp (\vep \sum_{j=1}^m \widehat{\Gamma}_{j\ell}(\beta))
\Big]  \leq e^{4 \vep \xi \delta m}  \qquad \& \qquad
\E \Big[ \exp (-\vep \sum_{j=1}^m \widehat{\Gamma}_{j\ell}(0))
\Big]  \leq e^{4 \vep \xi \delta m} \,.
\label{eq:moment-ul}
\end{equation}
Upon setting $z_{\beta}(\theta) :=\delta e^{\beta} e^{i \pi \theta} - \delta$,
we get from \eqref{dfn:Ij} that 
\[
\sum_{j=1}^m \widehat{\Gamma}_{j\ell}(\beta) =  
\frac{1}{2} \int_{-1}^{1} S_m(z_\beta(\theta)) d \theta 
\]
where, $2 \delta \ell 
= k x_\star$ thanks to our choice of $\ell$, so 
\begin{equation}\label{def:Am}
S_m(z) := 
\sum_{j=1}^m 
\Big\{ \log |f(j k x_\star+z)| - \E [\log |f(z)|]
\Big\} \,.
\end{equation}
Thus, applying Jensen's inequality for the convex functions 
$\exp(\pm \vep \, \cdot)$, 
further reduces the task of proving \eqref{eq:moment-ul} into 
showing that 
\begin{equation}\label{l:pointwise}
\sup_{|\theta| \le 1} \E \Big[ e^{\vep S_m(z_\beta(\theta))} \Big] 
\le e^{4\vep \xi \delta m} \qquad \& \qquad 
\sup_{|\theta| \le 1} \E \Big[ e^{-\vep S_m(z_0(\theta))} \Big] 
\le e^{4\vep \xi \delta m} \,.
\end{equation}
In view of the stationarity of $f(\cdot)$ under real translations, the 
law of $S_m(z)$ of \eqref{def:Am}
depends only on $\Im(z)$, hence in \eqref{l:pointwise}
we can \abbr{wlog} re-set 
$z_\beta(\theta) =iy$ 
for $y=\sin(\pi \theta) \delta e^{\beta}$.
Doing so, we consider for $|y| \le 2 \delta$,
the mean-zero, Gaussian variables
\begin{equation}\label{def:Gj}
G_j(y) := f(j x_\star + iy) \,, 
\end{equation}
and first relate $\E [\log |G_0 (y)|]$ which is part of $S_m(iy)$ to 
$\E [|G_0(y)|^{\pm \vep}]$.
\begin{lemma}
\label{l:reversejensen}
Given $\zeta>0$, for any 
$\vep \le \vep_0(\zeta)$ positive and 
all $|y| \le \kappa'$,
\begin{align}\label{eq:bd-mean-log}
\E [ |G_0(y)|^{\vep} ] & \leq (1+\vep \zeta)
\exp\Big( \vep \E \big[\log |G_0(y)|\big] \Big)\,, \\
\E [ |G_0(y)|^{-\vep} ] & \leq (1+\vep \zeta) 
\exp\Big( - \vep \E \big[\log |G_0(y)|\big] \Big)\,.
\label{eq:meanreal}
\end{align}
\end{lemma}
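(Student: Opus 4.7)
The plan is to derive both inequalities from a uniform-in-$y$ second-order Taylor expansion of $\vep \mapsto \E[|G_0(y)|^{\pm \vep}]$ at $\vep = 0$, exploiting the structure of $G_0(y) = f(iy)$ as a bivariate Gaussian with uniformly controlled components. Setting $x = 0$ in \eqref{formula3} shows that $\Re f(iy)$ and $\Im f(iy)$ are uncorrelated and, being jointly Gaussian, independent, with variances $\vR(y) \ge r(0) = 1$ and $y^2 \vI(y) \ge 0$ from \eqref{dfn:vR}, \eqref{dfn:vI}. Both $y \mapsto \vR(y)$ and $y \mapsto y^2\vI(y)$ are continuous on $[-\kappa',\kappa']$ and hence uniformly bounded. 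In particular $|G_0(y)| \ge |\Re f(iy)|$ pointwise, and the dominating real Gaussian has a non-degenerate law uniformly in $y$.

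From this one obtains, for a fixed $s_0 \in (0, 1/2]$, the uniform bound $\sup_{|y|\le \kappa',\, |s|\le s_0} \E[|G_0(y)|^{s}] \le c_\star$: for positive $s$ via $|G_0(y)|^s \le |\Re f(iy)|^s + |\Im f(iy)|^s$, and for negative $s$ directly from $|G_0(y)| \ge |\Re f(iy)|$. These in turn yield uniform bounds on $\mu(y) := \E \log|G_0(y)|$ (via $\log x \le x-1$ together with $\E|G_0(y)|^2 = \vR(y) + y^2 \vI(y)$ above, and $\log|G_0(y)| \ge \log|\Re f(iy)|$ below) and on $\E[(\log|G_0(y)| - \mu(y))^4]$ (by splitting into positive and negative parts of the logarithm and bounding each via $|\log x|^k \le C_{k,s_0}(x^{s_0} + x^{-s_0})$).

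Both target inequalities are then equivalent to $\E[e^{\pm\vep W(y)}] \le 1 + \vep\zeta$, where $W(y) := \log|G_0(y)| - \mu(y)$ has mean zero. Applying the elementary inequality $e^x \le 1 + x + \tfrac{x^2}{2}\, e^{|x|}$ and Cauchy--Schwarz,
\[
\E[e^{\pm \vep W(y)}] \le 1 + \tfrac{\vep^2}{2}\, \big(\E[W(y)^4]\big)^{1/2} \big(\E[e^{2\vep |W(y)|}]\big)^{1/2},
\]
where the first factor is uniformly bounded by the previous step, and the second is controlled by $\E[|G_0(y)|^{\pm 2\vep}] \cdot e^{\mp 2\vep\,\mu(y)}$, again uniform for $\vep \le s_0/2$. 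Thus $\E[e^{\pm\vep W(y)}] \le 1 + C\vep^2$ uniformly in $|y|\le \kappa'$, and choosing $\vep_0(\zeta) := \min(s_0/2,\, \zeta/C)$ finishes the proof.

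The chief (mild) technical obstacle is ensuring all constants remain uniform as $y \to 0$, where $\Im f(iy)$ degenerates and $G_0$ collapses to a genuinely one-dimensional real Gaussian. This is handled cleanly by never invoking fractional negative moments of $\Im f(iy)$ (which blow up as $y \to 0$), relying instead only on the domination $|G_0(y)| \ge |\Re f(iy)|$, whose right-hand side has Gaussian law bounded away from degeneracy uniformly in $y \in [-\kappa', \kappa']$ thanks to $\vR(y) \ge 1$.
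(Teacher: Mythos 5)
Your proposal is correct, and it reaches the conclusion by a somewhat different route than the paper. The paper works with the normalized functions $g_{\vep}(x):=|\vep|^{-1}(e^{\vep x}-\vep x-1)$, rewrites the target as $\E[g_{\pm\vep}(L(y))]\le\zeta$ with $L(y):=\log|G_0(y)|-\E\log|G_0(y)|$, and then runs a \emph{qualitative} uniform dominated-convergence argument: $g_{\pm\vep}\to 0$ uniformly on compacts, is dominated by $\widetilde g_\eta(x):=\eta^{-1}e^{\eta|x|}$ for $|\vep|\le\eta$, and the family $\{\widetilde g_\eta(L(y))\}_{|y|\le\kappa'}$ is uniformly integrable because $\sup_y\E[\widetilde g_\eta(L(y))^2]<\infty$, which in turn follows from $\sup_y\E[|G_0(y)|^{\pm 2\eta}]<\infty$. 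This produces $\vep_0(\zeta)$ only implicitly. Your argument is built on the very same moment bounds --- $\E|G_0(y)|^2=r(2iy)$ bounded above, and the pointwise domination $|G_0(y)|\ge|\Re f(iy)|$ with $\var(\Re f(iy))=\vR(y)\ge r(0)$ to control small negative fractional moments uniformly in $y$ --- but you package them into a \emph{quantitative} second-order estimate $\E[e^{\pm\vep W(y)}]\le 1+C\vep^2$ via $e^x\le 1+x+\tfrac{x^2}{2}e^{|x|}$, Cauchy--Schwarz, and a uniform fourth-moment bound on $W(y)=\log|G_0(y)|-\mu(y)$. This yields an explicit $\vep_0(\zeta)=\min(s_0/2,\zeta/C)$, at the cost of establishing a few extra uniform bounds (on $\mu(y)$ and on $\E[W(y)^4]$) that the paper's softer argument sidesteps. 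Both are valid; your version is more explicit, the paper's is more economical.
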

\begin{proof} We re-write \eqref{eq:bd-mean-log}--\eqref{eq:meanreal}
in terms 
of $L(y):=\log |G_0(y)| - \E [\log |G_0(y)|]$
and the non-negative function 
$g_\vep(x):=|\vep|^{-1} (e^{\vep x} - \vep x -1)$, as 
$\E [g_{\pm \vep} (L(y))] \le  \zeta$ and 
prove the lemma by showing that 
\begin{equation}\label{eq:bd-mean-log-alt}
\lim_{\vep \downarrow 0} \,
\sup_{|y| \le \kappa'} \big\{ \, \E [g_{\pm \vep} (L(y))] \, \big\} = 0 \,.
\end{equation}
Since
$|g_{\pm \vep} (x)|  
\le \eta^{-1} e^{\eta |x|} :=\widetilde{g}_\eta (x)$ whenever $|\vep| \le \eta$
and 
$g_{\pm \vep} (\cdot) \to 0$ uniformly on compact subsets of $\R$, 
the uniform in $y$
convergence \eqref{eq:bd-mean-log-alt}, is a
consequence of having for some $\eta>0$,  
\begin{equation}\label{eq:unif-int-log}
\sup_{|y| \le \kappa'} \big\{ \, \E [ \widetilde{g}_\eta(L(y)){ \bf 1}_{\{|L(y)| \ge b\}}] \, \big\} 
\to 0 \quad \hbox { for } \quad b \to \infty \,.
\end{equation}
Further, $\widetilde{g}_\eta(\cdot)$ diverges at infinity, so 
\eqref{eq:unif-int-log} follows from having 
$\sup_{|y| \le \kappa'} \big\{ \E [\widetilde{g}_\eta^2(L(y))] \big\}$ finite, 
for which it suffices to verify that $\sup_{|y| \le \kappa'} 
\{ \E [|G_0(y)|^{\pm 2\eta}] \}$ 
is finite. 
For the latter, recall \eqref{p:corrf} that
$\E [|G_0(y)|^2] = r(2iy)$,
which for $|y| \le \kappa'$ is uniformly bounded above 
(as $\kappa'<\kappa/2$),  whereas
$\E [|G_0(y)|^{-1/2}] \le C r(0)^{-1/4}$ for some universal 
$C<\infty$,
since $|G_0(y)|^{-1/2} \le |X|^{-1/2}$ for the zero-mean 
$\R$-valued Gaussian $X=\Re(G_0(y))$ of $\var(X) = \vR(y) \ge r(0)$
(see \eqref{dfn:vR}).
\end{proof}

The next proposition, which is our main technical statement, 
bounds small positive and negative fractional moments of the product of 
our Gaussian variables from \eqref{def:Gj}, after a suitable dilution. 
\begin{proposition}
\label{p:fracmoment}
For any $\zeta>0$ there is $\vep_\star(\zeta)>0$  
such that for $\vep \le \vep_\star$,
$k \ge k_\star(\zeta,\vep) \in \N$,
large enough $m$, and all $|y| < \kappa'$,
\begin{align}\label{eq:fracmomentu}
M_m(\vep) &:= \E \Big[ \prod_{j=1}^m |G_{jk}(y)|^{\vep} \Big] 
\leq e^{2 \vep \zeta m} \E [ |G_{0}(y)|^{2\vep} ]^{m/2} \,, \\
M_m(-\vep) &:= \E \Big[ \prod_{j=1}^m |G_{jk}(y)|^{-\vep} \Big] 
\leq e^{2 \vep \zeta m} \E [ |G_{0}(y)|^{-2\vep} ]^{m/2} \,.
\label{eq:fracmomentl}
\end{align}
\end{proposition}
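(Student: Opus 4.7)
The plan is to prove Proposition~\ref{p:fracmoment} by first reducing both \eqref{eq:fracmomentu} and \eqref{eq:fracmomentl} to a \emph{product} bound via Cauchy–Schwarz, and then to establish that product bound by an iterated conditioning/decorrelation argument that leverages the summable-correlation estimate of Lemma~\ref{l:decay} together with the auxiliary results on weakly-correlated Gaussians that Section~\ref{sec:cond} develops.

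First I would apply Cauchy–Schwarz to $X=\prod_{j=1}^m |G_{jk}(y)|^{\pm\vep}$ against the constant $1$, giving
\[
M_m(\pm\vep)\le \Big(\E \prod_{j=1}^m |G_{jk}(y)|^{\pm 2\vep}\Big)^{1/2}.
\]
Taking square roots, it then suffices (for each choice of sign) to establish, for $\vep$ small enough that $2\vep<1$, $k$ large, and $m$ large,
\begin{equation}\label{plan:dec}
\E \prod_{j=1}^m |G_{jk}(y)|^{\pm 2\vep}\le e^{4\vep\zeta m}\,\E[|G_0(y)|^{\pm 2\vep}]^m.
\end{equation}
Note that in the ``fully decorrelated'' case where the $G_{jk}(y)$ are treated as i.i.d., \eqref{plan:dec} is equality without the prefactor $e^{4\vep\zeta m}$; thus \eqref{plan:dec} quantifies how far from independence a suitably-spaced subsequence is.

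Second, I would prove \eqref{plan:dec} by peeling off one variable at a time. Set $\mathcal{F}_j:=\sigma(G_{ik}(y):i\le j)$ and write the orthogonal Gaussian decomposition $G_{jk}(y)=\mu_j+H_j$, with $\mu_j$ the $L^2(\mathcal{F}_{j-1})$-projection of $G_{jk}(y)$ and $H_j$ a centered $\C$-Gaussian independent of $\mathcal{F}_{j-1}$. Using the decay $\omega(k)\to 0$ from Lemma~\ref{l:decay} together with the diagonal-dominance estimates of Section~\ref{sec:cond}, one controls, uniformly in $j\le m$: (i) the conditional covariance of $H_j$ differs from the unconditional covariance of $G_0(y)$ by $O(\omega(k))$ in operator norm, and stays bounded away from singularity (using that both $\vR(y)$ and $\vI(y)$ of \eqref{dfn:vR}--\eqref{dfn:vI} are bounded below by a positive constant); (ii) $\|\mu_j\|_{L^2}=O(\omega(k))$, with $\mu_j$ only weakly correlated with the preceding product $\prod_{i<j}|G_{ik}(y)|^{\pm 2\vep}$. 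Combining these produces a one-step decorrelation estimate of the shape
\[
\E\Big[\prod_{i\le j}|G_{ik}(y)|^{\pm 2\vep}\Big]\le \bigl(1+c\vep\,\omega(k)\bigr)\,\E[|G_0(y)|^{\pm 2\vep}]\,\E\Big[\prod_{i<j}|G_{ik}(y)|^{\pm 2\vep}\Big],
\]
and iterating this over $j=1,\ldots,m$ with $k\ge k_\star(\zeta,\vep)$ chosen so that $c\,\omega(k)\le 4\zeta$ yields \eqref{plan:dec}.

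The hard part will be the one-step decorrelation estimate above, and specifically producing the sharp factor $\E[|G_0(y)|^{\pm 2\vep}]$ rather than a cruder surrogate. A naive Jensen bound $\E[|\mu_j+H_j|^{2\vep}\mid\mathcal{F}_{j-1}]\le (\E[|\mu_j+H_j|^{2}\mid\mathcal{F}_{j-1}])^{\vep}$ replaces the target factor by the strictly larger $(\E|G_0(y)|^2)^{\vep}$, and iterating such a weaker bound fails to close. Instead, one must compute directly with the planar Gaussian density of $H_j$, treating $\mu_j$ as a small $L^2$-perturbation via a Taylor expansion around $\mu_j=0$ and exploiting its approximate independence of the already-peeled product. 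For the negative exponent $-\vep$, an additional difficulty is conditional integrability of $|\mu_j+H_j|^{-2\vep}$, which amounts to a quantitative small-ball estimate for a nondegenerate $\C$-Gaussian with random (but $L^2$-small) shift $\mu_j$; boundedness of the planar Gaussian density supplies this once the non-degeneracy of the conditional covariance in (i) is in hand. These small-ball and decoupling lemmas for nearly-diagonal Gaussian systems are precisely what Section~\ref{sec:cond} is designed to provide.
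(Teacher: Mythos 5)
Your overall framework---iterated conditioning / peeling off one Gaussian at a time and using the summable-correlation estimate \eqref{eq:summable-corr} to show near-multiplicativity---is indeed the engine of the paper's proof as well (this is how the paper uses Lemma~\ref{l:goodmoment} to get \eqref{e:good1} and \eqref{e:good2}). However, there is a genuine gap in your proposal, and it is located exactly where you flag ``the hard part.'' The one-step decorrelation bound
\[
\E\Big[\prod_{i\le j}|G_{ik}(y)|^{\pm 2\vep}\Big]\le \bigl(1+c\vep\,\omega(k)\bigr)\,\E[|G_0(y)|^{\pm 2\vep}]\,\E\Big[\prod_{i<j}|G_{ik}(y)|^{\pm 2\vep}\Big]
\]
cannot be produced directly from (i)--(ii) as you describe, because for the positive exponent the conditional moment $\E[|G_{jk}(y)|^{2\vep}\mid\mathcal{F}_{j-1}]$ is \emph{unbounded} as a function of the realization of $\{G_{ik}(y)\}_{i<j}$: it grows like $(1+|\mu_j|^2)^{\vep}$, and the random shift $\mu_j$, although small in $L^2$, is not uniformly bounded. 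Worse, $\mu_j$ is a deterministic linear functional of $\{G_{ik}(y)\}_{i<j}$, so it is the opposite of ``approximately independent'' of the already-peeled product $\prod_{i<j}|G_{ik}(y)|^{2\vep}$---these two random variables become large simultaneously, in the correlation-unfavourable direction. A Taylor expansion around $\mu_j=0$ gives a pointwise bound that degrades for large $\mu_j$; controlling $\E\big[\prod_{i<j}|G_{ik}|^{2\vep}\cdot (1+|\mu_j|^2)^{\vep}\big]$ then requires an a priori bound on moments such as $\E\big[\prod_{i<j}|G_{ik}|^{4\vep}\big]$, which is of the same type as the quantity you are trying to estimate, creating a circularity. (Your preliminary Cauchy--Schwarz reduction to exponent $2\vep$ does not help with this; it merely renames $\vep$.)

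This is precisely what the paper's \emph{good/bad truncation} is designed to resolve, and it is a load-bearing ingredient rather than a convenience. The paper fixes a threshold $\Delta$, and its Lemma~\ref{l:goodmoment} asserts the sharp conditional bound \eqref{eq:goodmomentu}--\eqref{eq:goodmomentl} \emph{only on the event} $\{H_J(y)\le\Delta\}$ (that is, only when all conditioned-on coordinates are $\le\Delta$, so that $\|\mu_j\|_\infty=O(\omega(k)\Delta)$ is genuinely small). The complementary ``bad'' indices are assembled into the random set $B_\Delta$ of \eqref{dfn:bad}; on a given bad pattern $B$ one applies H\"older to split off $\E[\prod_{j\in B}|G_{jk}|^2]^{\vep/2}$ (or $\E[\prod_{j\in B}|G_{jk}|^{-4\vep}]^{1/4}$ for the negative exponent), bounded crudely but uniformly by Lemma~\ref{l:bad} (resp.~Lemma~\ref{l:worstcase}), against the exponentially small probability $\P(B\subseteq B_\Delta)^{1/4}\le e^{-c(\Delta)|B|}$ from Lemma~\ref{l:badprob}; summing over $B$ and choosing $\Delta$ large enough closes the argument. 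Your proposal omits this truncation step entirely, and without it, the one-step estimate you assert does not hold; to repair the proposal you would need to reintroduce the $\Delta$-cutoff (or find some genuinely new mechanism to control the heavy tail of $\mu_j$ against the correlated preceding product).
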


We proceed to obtain \eqref{l:pointwise} from Proposition
\ref{p:fracmoment}.
In view of \eqref{def:Am} and \eqref{def:Gj}, the \abbr{lhs} 
of \eqref{l:pointwise} amounts (after setting $z_\beta=iy$), to 
\begin{equation}\label{eq:pointwise-ubd}
\E \Big[ \prod_{j=1}^m |G_{jk}(y)|^\vep \Big] \le e^{4 |\vep| \xi \delta m}
\exp \Big( \vep m \E \log |G_0(y)| \Big)  \,,
\qquad \forall |y| \le 2 \delta  \,.
\end{equation}
Proceeding to show \eqref{eq:pointwise-ubd}, 
we set $\zeta:= \xi \delta>0$ and a positive 
$\vep \le \vep_\star (\zeta) \wedge \vep_0(\zeta)/2$, 
so Lemma \ref{l:reversejensen} applies at $2\vep$, then
fix $k \ge k_\star(\zeta,\vep)$ large enough 
as needed for Proposition \ref{p:fracmoment}.
Combining now the bound \eqref{eq:fracmomentu} with 
\eqref{eq:bd-mean-log} at $2\vep$ and 
the elementary inequality $(1+2\vep \zeta) \le e^{2 \vep \zeta}$,
yields the bound \eqref{eq:pointwise-ubd}. 
Similarly, the \abbr{rhs} of \eqref{l:pointwise} amounts to 
the inequality \eqref{eq:pointwise-ubd} at $-\vep<0$, so 
having the control of \eqref{eq:meanreal} on the $-\vep$-moment
of $G_0(y)$ in terms of
$\E \log|G_0(y)|$, we deduce 
that the \abbr{rhs} of \eqref{l:pointwise} 
follows from the bound \eqref{eq:fracmomentl}.

In conclusion, we have by now reduced the proof of Theorem \ref{t:upper} 
to the de-correlated moment computations of Proposition \ref{p:fracmoment},
to which we devote Sections \ref{sec:cond} and \ref{sec:mom}.

\section{Diagonally dominant Gaussian laws}\label{sec:cond}

We establish here a few preparatory results 
about weakly correlated, centered, $\C$-valued Gaussian vectors.
Our results are phrased in terms of 
\begin{equation}\label{eq:XYG}
f(j x_\star + i y) := G_j(y) := \sqrt{\vR (y)} X_{j}(y) 
+ i |y| \sqrt{\vI (y)} Y_j(y) \,,
\end{equation}
for 
$\vI(y)$ and $\vR(y)$ of \eqref{dfn:vI}--\eqref{dfn:vR},
standard, $\R$-valued Gaussian $X_j(y)$, $Y_j(y)$ 
which are independent of each other
(see \eqref{formula3}),
and all absolute constants are independent of $y \in (-\kappa',\kappa')$.
Such results apply whenever $\E [|G_j|^2]$ are uniformly 
bounded above and below, provided the covariance matrix 
of the Gaussian $\{G_j\}$ is diagonally dominant,
in the sense that 
the correlations between $\{X_j,Y_j\}$ 
and $\{X_{j+k},Y_{j+k}\}$ are absolutely summable (in $k$),
with a uniform (in $j$), tail decay, as in \eqref{eq:summable-corr}.

Our first result (needed for proving \eqref{eq:fracmomentu}),
is a uniform a-priori control on the second moment of the
product of such Gaussian variables (assuming only that they
have summable covariances, as in \eqref{eq:summable-cov}).
\begin{lemma}
\label{l:bad}
There exists finite $C_\star \ge 1$ 
such that for all $|y| < \kappa'$
and any finite $J \subset \N$,
\begin{equation}\label{eq:2nd-mom-bd} 
\E \Big[ \prod_{j\in J} |G_{j}(y)|^{2} \Big] \leq C_\star^{2 |J| } \,.
\end{equation}
\end{lemma}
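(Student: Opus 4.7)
The plan is to rewrite $|G_j(y)|^2 = U_j^2 + V_j^2$ via the decomposition in \eqref{eq:XYG}, with $U_j := \sqrt{\vR(y)}\, X_j(y)$ and $V_j := |y|\sqrt{\vI(y)}\, Y_j(y)$, and then combine the AM--GM inequality with a chi-squared moment computation. This will reduce matters to a uniform spectral bound on the covariance of the underlying real Gaussian vector, which in turn follows from Lemma \ref{l:decay}.

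Setting $n := |J|$, I would form the centered Gaussian vector $W \in \R^{2n}$ by concatenating $(U_{j_1}, V_{j_1}, \ldots, U_{j_n}, V_{j_n})$, and let $\Sigma$ denote its $2n \times 2n$ covariance matrix. Writing $W \stackrel{d}{=} \Sigma^{1/2} Z$ for a standard Gaussian $Z \in \R^{2n}$, AM--GM applied to the non-negative terms $U_j^2 + V_j^2$ yields
\[
\prod_{j \in J} |G_j(y)|^2 \;\le\; \Bigl(\tfrac{1}{n} \sum_{j \in J} (U_j^2 + V_j^2)\Bigr)^n \;=\; \frac{(Z^T \Sigma Z)^n}{n^n} \;\le\; \frac{\lambda_{\max}(\Sigma)^n}{n^n}\,\|Z\|^{2n}.
\]
Since $\|Z\|^2 \sim \chi^2_{2n}$, an explicit computation gives $\E \|Z\|^{2n} = 2^n \Gamma(2n)/\Gamma(n)$, which Stirling's formula bounds by $C_0^n n^n$ for an absolute constant $C_0$ (indeed $C_0 = 8/e + o(1)$). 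Thus $\E \prod_{j \in J} |G_j(y)|^2 \le (C_0 \lambda_{\max}(\Sigma))^n$, and it remains to bound $\lambda_{\max}(\Sigma)$ uniformly in $y \in (-\kappa',\kappa')$ and $n$.

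For this last step I would invoke Gershgorin's disc theorem, which reduces the task to bounding the largest row sum $\max_i \sum_j |\Sigma_{ij}|$. The diagonal entries of $\Sigma$ are $\vR(y)$ or $y^2 \vI(y)$, both bounded above by $r(2iy) \le r(2i\kappa')$ under \eqref{eq:exp-mom-nd}. Each off-diagonal entry is a product of these bounded standard deviations with a pairwise correlation among $\{X_j(y), Y_j(y)\}$, and hence is at most a constant multiple of a coordinate of the four-component vector $\bro((j-i)x_\star + iy)$ appearing in Lemma \ref{l:decay}. The summability supplied by \eqref{eq:summable-corr} then forces the $\ell^1$ norm of each row to be finite uniformly in $y$, $J$, and $n$, giving the desired bound on $\lambda_{\max}(\Sigma)$ and hence on $C_\star$.

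I do not anticipate a substantive conceptual obstacle here, as the hard work of controlling correlation tails has already been absorbed into Lemma \ref{l:decay}. The only point requiring some care is the precise identification of each row of $\Sigma$ with coordinates of the vectors $\bro(j x_\star + iy)$, after which the uniform summability in \eqref{eq:summable-corr} immediately supplies the row-sum bound.
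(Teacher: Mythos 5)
Your argument is correct, but takes a genuinely different route from the paper's. The paper's proof is shorter: it applies the complex Wick formula (\cite[Lemma 2.1.7]{GAFbook}) to write $\E\prod_{j}|G_j(y)|^2$ as a sum over permutations $\pi$ of products $\prod_j r(j,\pi(j))$, where $r(\ell,\ell')=\E[G_\ell \overline{G_{\ell'}}]=r((\ell-\ell')x_\star+2iy)$; replacing each factor by its modulus and enlarging the sum from permutations to all maps gives the bound $\prod_\ell \sum_{\ell'}|r(\ell,\ell')|\le C_y^{|J|}$, with $C_y$ finite uniformly in $y$ by \eqref{eq:summable-cov}. Your argument instead passes to the real coordinates $(U_j,V_j)$, applies AM--GM, computes a $\chi^2_{2n}$ moment to absorb the $n^n$, and controls $\lambda_{\max}(\Sigma)$ through Gershgorin using \eqref{eq:summable-corr}. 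Both hinge on the same content of Lemma \ref{l:decay} — uniform $\ell^1$-summability of the correlation structure — and for a generic middle index the Gershgorin row sum needs the summability over both positive and negative shifts, which follows from $\omega(1)<\infty$ together with the symmetry $\|\bro(-x+iy)\|=\|\bro(x+iy)\|$ inherited from stationarity. The paper's version is tighter (the row sum is $C_\star^2$ directly, no Stirling constant) while yours carries an extra absolute factor $C_0\approx 8/e$ from the chi-squared moment; on the other hand, your real-variable route sidesteps the complex Wick formula, which as stated in the cited reference is for circularly symmetric complex Gaussians, whereas here $\E[G_j G_{j'}]=r((j-j')x_\star)\neq 0$, so that step of the paper's proof arguably merits a remark that your approach makes unnecessary.
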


\begin{proof} For centered Gaussian $(Z_1,\ldots,Z_n) \in \C^n$ with 
$r(\ell,\ell') = \E [Z_\ell \overline Z_{\ell'}]$ 
one has that  
\begin{align}\label{eq:univ-2nd-mom-bd}
M_n &:= \E[\, \prod_{\ell=1}^n |Z_{\ell}|^2 \,] 
\le \prod_{\ell=1}^n R_\ell \quad \hbox{ where } \quad
R_\ell := \sum_{\ell'=1}^n |r(\ell,\ell')|  \,.
\end{align}
Indeed, by Wick's formula (see \cite[Lemma 2.1.7]{GAFbook}),    
\begin{equation}\label{eq:complex-wick}
M_n = \sum_{\pi} \prod_{j=1}^n r (j,\pi(j)) \,,
\end{equation}
where we sum over permutations $\pi$ of 
$\cS := \{1,2,\ldots,n\}$. To bound
$M_n$ from above, replace each term $r(j,\pi(j))$ by
$|r(j,\pi(j))|$, whereupon having only non-negative terms, 
further bound $M_n$ by summing over the larger collection
of all functions $\pi:\cS \mapsto \cS$. The latter sum
is precisely the product of 
$\{ R_\ell : \ell \in \cs \}$, yielding \eqref{eq:univ-2nd-mom-bd}.
 
Now apply \eqref{eq:univ-2nd-mom-bd} for the centered complex 
Gaussian $\{ G_j(y) , j \in J \}$ and bound $R_\ell$ by summing 
over all $\ell' \in \Z$.
Setting
$C_{y} := \sum_{j \in \Z} |r(j x_\star + 2iy)|$, we thus get
from Prop. \ref{p:corrf}(b) that for any $J$ and $|y| < \kappa'$, 
\[
\sup_{\ell} \{ R_\ell \} \le C_y \le \sup_{|y| < \kappa'} \{C_{y} \} := C_\star^2 
\]
which is finite by Lemma \ref{l:decay} (see \eqref{eq:summable-cov}).
\end{proof}

Let $\cJ_k$ denote the collection of all 
finite 
sets $\{j_1,j_2,\ldots,j_n\} \subset \N$,
where $j_i \ge j_{i-1} + k$ for $j_0:=0$ and any $i \in [1,n]$. 
Note that for the sequence 
$\omega(k) \to 0$ of \eqref{eq:summable-corr}, 
and $J \in \cJ_k$, the 
centered, $\R$-valued, Gaussian vector 
$\mathbf{Z}=(X_0(y),Y_0(y), \{X_j(y),Y_j(y)\}_{j \in J})$ 
has covariance matrix $\bSig := \bI - \bS$ such that 
for all $|y| < \kappa'$,
\begin{equation}\label{dfn:diag-dom}
\| \bS \|_{\infty \to \infty} := \sup_{{\bf x} \ne {\bf 0}} \Big\{ 
\frac{\| \bS {\bf x} \|_\infty}{\| {\bf x}\|_\infty} \Big\} =
\max_{j} \Big\{ \sum_{j'} |\bS_{jj'}| \Big\} \le \omega(k) \,.
\end{equation}
We next detail three elementary 
properties of Gaussian vectors having such a diagonally 
dominant covariance matrix.
\begin{lemma}
\label{l:diagdominant}
Suppose $\mathbf{Z}=({\bf Z}_1,{\bf Z}_2)$ is
centered, $n$-dimensional $\R$-valued Gaussian vector 
and $\cov({\bf Z}) := \bI - \bS$ with 
$\|\bS\|_{\infty \to \infty} \le \omega < 1$. Then, setting $\widehat{\omega}_i := \omega^i/(1-\omega)$,
$i=0,1,2$, we have that:

\noindent
(a) All entries of the \abbr{psd} matrix $\cov({\bf Z}_1) - \cov({\bf Z}_1|
{\bf Z}_2)$ 
are within $[-\widehat{\omega}_2,\widehat{\omega}_2]$.
\newline
(b) The inequality  
$\|\E [ {\bf Z}_1 \, | {\bf Z}_2 ] \|_\infty \le \widehat{\omega}_1 \| {\bf Z_2} \|_\infty $ holds.
\newline
(c) The density $f_{\bf Z} (\cdot)$ of $\mathbf{Z}$ with 
respect to i.i.d. standard Normal variables, is such that 
\begin{equation}\label{eq:bd-rnd}
f_{\bf Z} ({\bf z}) \le
(\widehat{\omega}_0)^{n/2} \exp(\widehat{\omega}_1 \|{\bf z}\|_2^2/2) \,.
\end{equation}
\end{lemma}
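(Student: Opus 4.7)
The plan is to deduce parts (a) and (b) from the standard block/Schur-complement formulas for Gaussian conditioning combined with a Neumann series, and to handle part (c) by writing the density ratio explicitly and upgrading the $\ell^\infty\!\to\!\ell^\infty$ hypothesis into a spectral bound via symmetry.

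First I would write $\bSig := \cov(\mathbf{Z}) = \bI - \bS$ in block form according to $(\mathbf{Z}_1,\mathbf{Z}_2)$, so that $\cov(\mathbf{Z}_1,\mathbf{Z}_2) = -\bS_{12}$ and $\cov(\mathbf{Z}_2) = \bI - \bS_{22}$. Since $\bSig$, and hence $\bS$, is symmetric, so is $\bS_{22}$; moreover $\|\bS_{22}\|_{\infty\to\infty}$ and $\|\bS_{12}\|_{\infty\to\infty}$ are each at most $\|\bS\|_{\infty\to\infty}\le\omega<1$ (rows of a submatrix have smaller absolute sums). The Neumann series $(\bI-\bS_{22})^{-1}=\sum_{k\ge 0}\bS_{22}^k$ therefore converges in $\|\cdot\|_{\infty\to\infty}$ with norm at most $(1-\omega)^{-1}=\widehat\omega_0$.

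For part (a), the law of total variance identifies $\cov(\mathbf{Z}_1)-\cov(\mathbf{Z}_1|\mathbf{Z}_2)$ with the \abbr{psd} matrix $\cov(\E[\mathbf{Z}_1|\mathbf{Z}_2]) = \bS_{12}(\bI-\bS_{22})^{-1}\bS_{21}$, whose $\infty\to\infty$ norm is at most $\omega\cdot\widehat\omega_0\cdot\omega = \widehat\omega_2$ by submultiplicativity; since every matrix entry is dominated by the maximum absolute row sum, the entrywise bound follows. Part (b) is then immediate from the Gaussian conditional-mean formula $\E[\mathbf{Z}_1|\mathbf{Z}_2]=-\bS_{12}(\bI-\bS_{22})^{-1}\mathbf{Z}_2$ together with $\|\bS_{12}(\bI-\bS_{22})^{-1}\|_{\infty\to\infty}\le\omega\widehat\omega_0 = \widehat\omega_1$.

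For part (c), I would express the Radon--Nikodym derivative of the law of $\mathbf{Z}$ with respect to i.i.d.\ standard Gaussians as $(\det\bSig)^{-1/2}\exp\bigl(\tfrac12 \mathbf{z}^T(\bI-\bSig^{-1})\mathbf{z}\bigr)$, which requires a lower bound on $\det\bSig$ together with a quadratic-form bound on $\bI-\bSig^{-1}$. This is the step I expect to be the main (mild) obstacle, since the hypothesis controls only the $\ell^\infty\!\to\!\ell^\infty$ norm of $\bS$, whereas the determinant and the quadratic form are naturally governed by the spectral norm. The way around it is symmetry of $\bS$: it forces $\|\bS\|_{1\to 1}=\|\bS\|_{\infty\to\infty}\le\omega$, and then the Schur test (equivalently, Riesz--Thorin interpolation) gives $\|\bS\|_{2\to 2}\le\sqrt{\|\bS\|_{1\to 1}\|\bS\|_{\infty\to\infty}}\le\omega$. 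Consequently every eigenvalue of $\bSig$ lies in $[1-\omega,1+\omega]$, yielding $\det\bSig\ge(1-\omega)^n$ and thus $(\det\bSig)^{-1/2}\le\widehat\omega_0^{n/2}$; moreover $\bI-\bSig^{-1}=-\sum_{k\ge 1}\bS^k$ is symmetric with spectral radius at most $\sum_{k\ge 1}\omega^k=\widehat\omega_1$, so $\mathbf{z}^T(\bI-\bSig^{-1})\mathbf{z}\le\widehat\omega_1\|\mathbf{z}\|_2^2$. Combining these two estimates delivers \eqref{eq:bd-rnd}.
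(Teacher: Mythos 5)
Your proof is correct and follows essentially the same route as the paper: block decomposition plus the Neumann series $(\bI-\bS_{22})^{-1}=\sum_k \bS_{22}^k$ for parts (a) and (b), and the Radon--Nikodym density $|\bSig|^{-1/2}\exp(\tfrac12\mathbf z^T(\bI-\bSig^{-1})\mathbf z)$ together with spectral bounds on $\bS$ for part (c). The one place where you are more explicit than the paper is in justifying the passage from the $\ell^\infty\!\to\!\ell^\infty$ hypothesis to the $\ell^2\!\to\!\ell^2$ bound via symmetry of $\bS$ and the Schur/Riesz--Thorin interpolation; the paper compresses this to the terse remark that the norm in \eqref{dfn:diag-dom} ``dominates the spectral norm,'' which implicitly uses the same symmetry.
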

\begin{proof} (a). Our assumption that $\|\bS\| \le \omega < 1$, implies that  
$\bSig^{-1} = \sum_{n \ge 0} \bS^n$ satisfies
\begin{equation}\label{eq:bd-inv}
\| \bI - \bSig^{-1} \|_{\infty \to \infty} \le \sum_{n=1}^\infty \omega^n = \widehat{\omega}_1 \,, 
\quad 
\|\bSig^{-1}\|_{\infty \to \infty} 
\le \sum_{n=0}^\infty \omega^n  = \widehat{\omega}_0 \,.
\end{equation}
With
$\bSig_{11} := \cov({\bf Z}_1)$,
$\bSig_{22} := \cov({\bf Z}_2)$,
$\bSig_{12} = (\bSig_{21})^\star = \cov({\bf Z}_1,{\bf Z}_2)$, 
and 
$\bSig_{1|2}:=\cov({\bf Z}_1 | {\bf Z}_2)$,
recall that (see \cite[Exer. 2.1.3]{GAFbook}),
\begin{equation}\label{dfn:cond-cov}
\bSig_{11} - \bSig_{1|2} =
\bSig_{12} \bSig_{22}^{-1} \bSig_{21} \,. 
\end{equation}
The $L_1$-norm of each column of 
$\bSig_{21}$ is by assumption at most $\omega$. Further, the \abbr{rhs}
of \eqref{eq:bd-inv} applies to $\bSig_{22}^{-1}$, which by
\eqref{dfn:cond-cov} implies that all entries of 
$\bSig_{11}-\bSig_{1|2}$ are indeed 
within $[-\widehat{\omega}_2, \widehat{\omega}_2]$.
\newline 
(b). Since
$\bmu := \E [ {\bf Z}_1 \, | {\bf Z}_2 ]=\bSig_{12}\bSig_{22}^{-1}{\bf Z}_2$
and the 
\abbr{rhs} of \eqref{eq:bd-inv} applies for $\bSig_{22}$,
we deduce as in part (a), that
necessarily $\|\bmu\|_\infty \le \omega \, \widehat{\omega}_0 
\|{\bf Z}_2\|_\infty$.
\newline
(c). The matrix norm of \eqref{dfn:diag-dom} dominates the spectral norm. 
In particular, from the \abbr{lhs} of \eqref{eq:bd-inv} we deduce that 
\[
\langle {\mathbf z}, (\bI - \bSig^{-1}){\mathbf z} \rangle 
\le \widehat{\omega}_1 \| {\mathbf z} \|_2^2 \,.
\]
Further, the \abbr{rhs} of \eqref{eq:bd-inv} implies
that all eigenvalues of $\bSig^{-1}$ are within $[0,\widehat{\omega}_0]$, 
hence the density 
\[
f_{\bf Z}({\bf z}) = |\bSig^{-1}|^{1/2} \exp\big(\frac{1}{2}
\langle {\mathbf z}, (\bI - \bSig^{-1}){\mathbf z} \rangle \big) \,,
\]
satisfies the bound \eqref{eq:bd-rnd}, as claimed.
\end{proof}

Relying on diagonal dominance to lower bound the conditional 
variances, as in Lemma \ref{l:diagdominant}(a), we get the following
negative moment bound 
(which will later be useful when proving \eqref{eq:fracmomentl}).
\begin{lemma}
\label{l:worstcase}
For some finite $k_o,C_o \ge 1$ and $\vep_o>0$, all $\vep \leq \vep_o$, $|y| < \kappa'$ and any $J \in \cJ_{k_o}$  
\begin{equation}\label{e:worstcase}
\E\Big[ |G_0(y)|^{-4\vep} \; \mid \; \{ G_{j}(y) : j\in J\} \big] \leq 
C_o  \,.
\end{equation}
\end{lemma}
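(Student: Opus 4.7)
The plan is to reduce the target bound on the conditional negative moment of $|G_0(y)|$ to the analogous bound for a single real Gaussian coordinate whose conditional variance remains uniformly bounded away from zero, and then exploit the fact that $|x|^{-\alpha}$ is integrable near the origin for $\alpha<1$.

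First, from the decomposition \eqref{eq:XYG} and the inequality $\vR(y) = \int \cosh^2(\lm y) d\rho(\lm) \ge \int d\rho = 1$ coming from \eqref{dfn:vR}, one gets
\[
|G_0(y)|^2 \;\ge\; \vR(y)\,X_0(y)^2 \;\ge\; X_0(y)^2,
\]
so $|G_0(y)|^{-4\vep} \le |X_0(y)|^{-4\vep}$. For any $y\ne 0$ the $\sigma$-algebra generated by $\{G_j(y):j\in J\}$ coincides with that generated by $\{X_j(y),Y_j(y):j\in J\}$ (and the $y=0$ case is simpler, with $Y_j$ dropping out), so it suffices to estimate $\E[|X_0(y)|^{-4\vep}\mid \{X_j(y),Y_j(y):j\in J\}]$.

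Second, the centered Gaussian vector $\mathbf{Z}:=(X_0(y),Y_0(y),\{X_j(y),Y_j(y)\}_{j\in J})$ has standard Normal marginals which are pairwise independent at each fixed $j$ (by \eqref{formula3} evaluated at $x=0$), so its covariance matrix has the form $\bI-\bS$ with vanishing diagonal of $\bS$, and \eqref{dfn:diag-dom} gives $\|\bS\|_{\infty\to\infty}\le \omega(k_o)$ whenever $J\in\cJ_{k_o}$. Using Lemma \ref{l:decay} to choose $k_o$ so large that $\omega(k_o)\le 1/2$, Lemma \ref{l:diagdominant}(a) applied to $\mathbf{Z}_1:=X_0(y)$ and $\mathbf{Z}_2$ the remaining coordinates yields
\[
|\sigma_0^2-1| \;\le\; \widehat{\omega}_2(k_o) \;=\; \frac{\omega(k_o)^2}{1-\omega(k_o)} \;\le\; 1/2,
\]
so the conditional variance $\sigma_0^2$ of $X_0(y)$ is at least $1/2$, uniformly in $|y|<\kappa'$, $J\in\cJ_{k_o}$, and in the conditioning values.

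Third, under the resulting Gaussian conditional law $\calN(\mu_0,\sigma_0^2)$, split the integration at $|x|=1$: on $\{|x|>1\}$ the integrand satisfies $|x|^{-4\vep}\le 1$, while on $\{|x|\le 1\}$ the density is bounded by $(2\pi\sigma_0^2)^{-1/2}\le \pi^{-1/2}$. Hence
\[
\E\big[|X_0(y)|^{-4\vep}\,\big|\,\cdot\,\big] \;\le\; 1 + \frac{1}{\sqrt{\pi}}\int_{-1}^{1}|x|^{-4\vep}\,dx \;=\; 1 + \frac{2}{\sqrt{\pi}\,(1-4\vep)},
\]
which is bounded by an absolute constant $C_o$ for all $\vep\le \vep_o:=1/8$, uniformly in $\mu_0$. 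The only substantive step is the lower bound on the conditional variance in the second paragraph, which rests entirely on the absolute summability of correlations in Lemma \ref{l:decay} (hence ultimately on Assumption A); once that is in place the negative-moment bound is a short Gaussian calculation.
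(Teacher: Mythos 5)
Your proof is correct and follows essentially the same route as the paper: bound $|G_0(y)|^{-4\vep}$ by the $-4\vep$ power of its real part (equivalently, the standardized $X_0(y)$), invoke Lemma \ref{l:diagdominant}(a) together with \eqref{dfn:diag-dom} to lower-bound the conditional variance of $X_0(y)$ uniformly in $y$ and $J\in\cJ_{k_o}$, and finish with a short Gaussian negative-moment estimate. The only cosmetic difference is that the paper first treats $\vep_o=1/8$ and then passes to smaller $\vep$ via Jensen's inequality, while you bound all $\vep\le 1/8$ in one step.
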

\begin{proof} Since $|z| \ge |\Re(z)|$ it suffices to show
that \eqref{e:worstcase} holds when $\Re(G_0(y))$ replaces 
$|G_0(y)|$. Further, we only need to do so for say 
$\vep_o = 1/8$, as it thereafter extends by Jensen's 
inequality (and the convexity of 
$g(x)=x^p$ on $\R_+$ when $p=\vep_o/\vep \ge 1$),
to all $\vep \le \vep_o$. To this end, recall that the 
conditional law of 
$\Re(G_0(y))$
given the finite $\C$-valued Gaussian collection 
$\{ G_{j}(y), j \in J \}$, is Gaussian of some 
non-random (conditional) variance $v=\vR(y;J)$
and random mean $\widehat{\mu} \sqrt{v}$ (see \cite[Exer. 2.1.3]{GAFbook}).
With $\phi(\cdot)$ 
denoting the standard normal density, we thus have by scaling,
that the conditional expectation of $|\Re(G_0(y))|^{-1/2}$ is at most
\[
v^{-1/4} \sup_{\widehat{\mu} \in \R} \,\Big\{ \int_\R (|x-\widehat{\mu}| \wedge 1)^{-1/2} \phi(x) dx \Big\} := v^{-1/4} C_1 \,,
\]
for some finite constant $C_1 \le 1 + \phi(0) \int_{-1}^{1} |x|^{-1/2} dx$.
With $\vR(y)$ uniformly bounded below and 
$\omega(k_o) \le 1/3$ for some $k_o$ finite
(see \eqref{eq:summable-corr}), it follows that 
\[
u(k_o):=
(1 - \omega(k_o)) \inf_{|y| < \kappa'} \{ \vR(y) \} > 0 
\]
and we get \eqref{e:worstcase} with $C_o = u(k_o)^{-1/4} C_1$, upon showing that 
\begin{equation}\label{eq:bdd-below-var}
\inf_{J \in \cJ_{k_o},  |y| < \kappa'} \{\vR (y;J)\} \ge u(k_o) \,.
\end{equation}  
To this end, as $\E [X_0(y)^2]=1$ and $\omega(k_o) \le 1/2$,
from Lemma \ref{l:diagdominant}(a) we then have that 
\[
\frac{\vR(y;J)}{\vR(y)} 
= \E [ X_0(y)^2 \mid  \{X_{j}(y),Y_j(y): j \in J\} ] \ge \E [ X_0(y)^2 ] 
- \frac{\omega(k_o)^2}{1-\omega(k_o)} \ge 1 - \omega(k_o) \,,
\]
and \eqref{eq:bdd-below-var} follows.
\end{proof}

Next, for fixed $k \ge k_o$, $y$, $m$ and any threshold $\Delta>0$, we
define the collection  
\begin{equation}\label{dfn:bad}
B_\Delta := \{1 \le j \le m : |X_{jk}(y)| \vee |Y_{jk}(y)| > \Delta  \} \,,
\end{equation}
of ``bad" indices, and use diagonal dominance (specifically, Lemma 
\ref{l:diagdominant}(c)), to show that for large~$\Delta$ it is 
exponentially highly unlikely to have many bad indices.
\begin{lemma}
\label{l:badprob}
There exists $c(\Delta) \to \infty$ as $\Delta \to \infty$ such that 
for any $k \ge k_o$, all $|y|<\kappa'$ and non-random 
$B \subset \{ 1, \ldots, m\}$,
\begin{equation}\label{eq:large-badset}
\P(B \subseteq B_\Delta) \leq e^{-4 c(\Delta)|B|} \,.
\end{equation}
\end{lemma}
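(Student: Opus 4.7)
The plan is to exploit the diagonal dominance already established: for $k \ge k_o$ chosen as in Lemma~\ref{l:worstcase}, the centered $\R$-valued Gaussian vector $\mathbf{Z}_B := (X_{jk}(y), Y_{jk}(y))_{j\in B}$ has covariance $\bSig = \bI - \bS$ with $\|\bS\|_{\infty\to\infty} \le \omega(k_o)$, which by \eqref{eq:summable-corr} can be made as small as we like by enlarging $k_o$. In particular, assume $k_o$ is large enough so that $\omega(k_o) \le 1/3$ (as already demanded in Lemma~\ref{l:worstcase}), which gives $\widehat{\omega}_1 \le 1/2$ and $\widehat{\omega}_0 \le 3/2$ uniformly in $|y|<\kappa'$, in the notation of Lemma~\ref{l:diagdominant}.

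Next, I would write the probability in question as an integral against the $n$-dimensional standard Gaussian density $\phi_n$, where $n = 2|B|$. Setting
\[
A := \{\mathbf{z} \in \R^{n} : |x_j|\vee|y_j| > \Delta \text{ for every } j \in B\},
\]
we have $\P(B\subseteq B_\Delta) = \int_A f_{\mathbf{Z}_B}(\mathbf{z})\,\phi_n(\mathbf{z})\,d\mathbf{z}$. Applying Lemma~\ref{l:diagdominant}(c), $f_{\mathbf{Z}_B}(\mathbf{z}) \le (\widehat{\omega}_0)^{n/2}\exp(\widehat{\omega}_1\|\mathbf{z}\|_2^2/2)$, so the integrand is dominated by a constant multiple of $\exp\bigl(-(1-\widehat{\omega}_1)\|\mathbf{z}\|_2^2/2\bigr)$, which is integrable because $1-\widehat{\omega}_1 \ge 1/2 > 0$.

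The set $A$ factorizes across $j \in B$, so the integral factorizes into $|B|$ identical two-dimensional integrals of the form
\[
I(\Delta) := \int_{\{|x|\vee|y|>\Delta\}} \exp\bigl(-(1-\widehat{\omega}_1)(x^2+y^2)/2\bigr)\,dx\,dy,
\]
each of which decays like $e^{-(1-\widehat{\omega}_1)\Delta^2/2}$ by a standard Gaussian tail estimate (e.g.\ union bound on $|x|>\Delta$ versus $|y|>\Delta$). Collecting the constants, one obtains $\P(B\subseteq B_\Delta) \le (C')^{|B|} e^{-c_0 \Delta^2 |B|}$ for absolute constants $C',c_0>0$ independent of $k\ge k_o$, $|y|<\kappa'$ and~$B$. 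Choosing $c(\Delta) = c_0 \Delta^2/4 - \tfrac14\log C'$ gives $c(\Delta)\to\infty$ and the claimed bound \eqref{eq:large-badset}.

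The only subtlety I anticipate is book-keeping the factor $\widehat{\omega}_1$ so that the Gaussian weight in the integrand remains strictly sub-Gaussian (so the integral converges); this forces the choice $\omega(k_o)$ small, but such a $k_o$ is supplied by \eqref{eq:summable-corr}, and since $\omega(k) \le \omega(k_o)$ for every $k\ge k_o$, the bound is uniform in $k$. All other steps are routine.
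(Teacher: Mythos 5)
Your proof is correct and goes by a genuinely different (and slightly more direct) route than the paper's. The paper first unions over the $2^{|B|}$ possible ways of assigning each $j\in B$ to the $X$-excursion or the $Y$-excursion, applies Cauchy--Schwarz, and then invokes Lemma~\ref{l:diagdominant}(c) separately for the $|J|$-dimensional $X$-subvector and the $|B\setminus J|$-dimensional $Y$-subvector; the $2^{|B|}$ and the $1/4$-exponent are then absorbed into the constant $b(\Delta)=8c(\Delta)+\log 4$. You instead apply Lemma~\ref{l:diagdominant}(c) once, to the full $2|B|$-dimensional vector $(X_{jk},Y_{jk})_{j\in B}$ (whose covariance is diagonally dominant since $\{jk:j\in B\}\in\cJ_k$ and passing to a principal submatrix can only shrink $\|\bS\|_{\infty\to\infty}$), bound the resulting integrand by the product $(\widehat\omega_0)^{n/2}\exp\bigl(-(1-\widehat\omega_1)\|\mathbf z\|_2^2/2\bigr)$, and use that both this product measure and the set $A=\prod_{j\in B}\{|x|\vee|y|>\Delta\}$ factor across $j$. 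This avoids the union bound and the Cauchy--Schwarz step entirely, making the bookkeeping a little cleaner (no $2^{|B|}$ to absorb). Both proofs hinge on exactly the same ingredient, namely the Radon--Nikodym bound \eqref{eq:bd-rnd}, and the uniformity in $k\ge k_o$ and $|y|<\kappa'$ comes out the same way in each.
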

\begin{proof} Since the event $\{B \subseteq B_\Delta\}$ is the union of
\[
\{ |X_{jk}(y)| > \Delta, \; \forall j \in J \} \cap 
\{ |Y_{jk}(y)| > \Delta, \; \forall j \in B \setminus J \} \,, 
\] 
over the $2^{|B|}$ possible $J \subseteq B$, by Cauchy-Schwartz  
it suffices for \eqref{eq:large-badset} to show that 
for some $b(\Delta) := 8 c(\Delta) + \log 4 \to \infty$ as $\Delta \to \infty$,
both 
\begin{align}\label{eq:X-bad}
p_{J} (\Delta) &:= \P( |X_{jk}(y)| > \Delta, \; \forall j \in J) 
\le e^{-b(\Delta) |J|} \,, \\
q_{J} (\Delta) &:= \P( \, |Y_{jk}(y)| > \Delta, \; \forall j \in J)  \le 
e^{-b(\Delta) |J|} \,.
\label{eq:Y-bad}
\end{align}
To this end, recall that 
$1 - \omega(k) \ge 2/3 \ge 2 \omega(k)$, whenever $k \ge k_o$. 
Hence, from Lemma \ref{l:diagdominant}(c) we have the bound 
\[
p_{J} (\Delta)^{1/|J|} \le (1-\omega(k))^{-1/2} 
\E \Big[e^{\frac{\omega(k) X_0^2}{2(1-\omega(k))}} {\bf 1}_{\{|X_0| > \Delta\}} \Big]  
\le  \frac{2 \sqrt{3/2}}{\sqrt{2\pi}} \int_{\Delta}^\infty e^{-x^2/4} dx 
: = e^{-b(\Delta)}
\]
for which \eqref{eq:X-bad} holds. 
Exactly the same argument applies for $q_J(\Delta)$, 
yielding the bound \eqref{eq:large-badset}.
\end{proof}

We conclude the section by showing that,
thanks to Lemma \ref{l:diagdominant}(b), 
for large $k=k(\Delta,\vep)$ and any $J \in \cJ_k$,
the conditional expectation of $|G_0(y)|^{\pm \vep}$
given a realization of $\{X_{j}(y), Y_{j}(y) : j \in J \}$, 
all of whom are in a specified range $[-\Delta,\Delta]$, 
is within error $(1+o(\vep))$ of the 
unconditional expectation. 
\begin{lemma}
\label{l:goodmoment}
Let $H_J(y) := \max_{j \in J} \{ |X_{j}(y)|, |Y_j(y)| \}$.
There exist
$k_\star(\Delta,\zeta,\vep) : \R_+^3 \to [k_o,\infty)$
and $\vep_\star > 0$,
such that for
any $\Delta,\zeta>0$, $\vep \leq \vep_\star$, 
$J \in \cJ_{k_\star}$ and
$|y| < \kappa'$
\begin{align}\label{eq:goodmomentu}
 \E \Big[ |G_0 (y)|^{\vep} 
 \mid \, \{ G_{j}(y)\}_{j \in J} 
\Big] {\bf 1}_{\{H_J(y) \le \Delta\}}
&\leq e^{\vep \zeta}  \E [ |G_0(y)|^{\vep} ] \,, \\
\E \Big[ |G_0 (y)|^{-\vep} \mid \, \{G_{j}(y)\}_{j\in J} \Big] 
{\bf 1}_{\{H_J(y) \le \Delta\}}
& \leq e^{\vep \zeta}  \E [ |G_0(y)|^{-\vep} ]  \,.
\label{eq:goodmomentl}
\end{align} 
\end{lemma}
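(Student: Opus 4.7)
The plan is to compare the conditional and unconditional laws of $\mathbf{Z}_1 := (X_0(y), Y_0(y))$ via a Cauchy--Schwarz bound on a centered Radon--Nikodym derivative, reducing the task to estimating a $\chi^2$-divergence between two nearby $2$-dimensional Gaussians. Set $\mathbf{Z}_2 := \{(X_j(y), Y_j(y))\}_{j\in J}$. For $J \in \cJ_k$, Lemma \ref{l:diagdominant}(a)--(b) applied to the covariance $\bI - \bS$ with $\|\bS\|_{\infty\to\infty} \le \omega(k)$ shows that the conditional law $Q_v$ of $\mathbf{Z}_1$ given $\mathbf{Z}_2 = v$ is $\calN(\hat\mu_v, \bSig_{1|2})$ with deterministic $\bSig_{1|2}$ satisfying $\|\bSig_{1|2} - \bI_2\|_{\infty \to \infty} \le \widehat\omega_2(k)$ and $\|\hat\mu_v\|_\infty \le \widehat\omega_1(k)\|v\|_\infty$. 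On $\{H_J(y) \le \Delta\}$ this yields $\|\hat\mu_v\|_\infty \le \widehat\omega_1(k)\Delta$, with both $\widehat\omega_i(k) \to 0$ as $k \to \infty$. The unconditional law is $P := \calN(0, \bI_2)$, independent of $y$.

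Writing $\E[|G_0(y)|^{\pm \vep} \mid \mathbf{Z}_2 = v] - \E_P[|G_0(y)|^{\pm \vep}] = \E_P[|G_0(y)|^{\pm \vep}(dQ_v/dP - 1)]$ and applying Cauchy--Schwarz in $L^2(P)$ (using $\E_P[dQ_v/dP - 1] = 0$),
\[
\bigl|\E[|G_0(y)|^{\pm \vep} \mid \mathbf{Z}_2 = v] - \E_P[|G_0(y)|^{\pm \vep}]\bigr| \le \sqrt{\chi^2(Q_v \| P)} \cdot \bigl(\E_P[|G_0(y)|^{\pm 2\vep}]\bigr)^{1/2}.
\]
An explicit expansion of $\chi^2(\calN(\mu, \Sigma) \| \calN(0, \bI_2))$ around its minimizer $(\mu, \Sigma) = (0, \bI_2)$ (at which $\chi^2 = 0$, so the first-order Taylor terms vanish) gives
\[
\chi^2(Q_v \| P) \le C\bigl(\|\hat\mu_v\|_2^2 + \|\bSig_{1|2} - \bI_2\|_F^2\bigr) \le C' \bigl(\omega(k)\Delta\bigr)^2,
\]
uniformly in $|y| < \kappa'$ and in $v$ with $\|v\|_\infty \le \Delta$. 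The factor $\bigl(\E_P[|G_0(y)|^{\pm 2\vep}]\bigr)^{1/2}$ is uniformly bounded over $|y| < \kappa'$ and $\vep \le \vep_\star$ for a universal $\vep_\star < 1/2$ (by $|G_0(y)|^2 \le (\vR(y) \vee \kappa'^2 \vI(y))(X_0^2 + Y_0^2)$ for the positive case, and by $|G_0(y)| \ge \sqrt{\vR(y)} |X_0|$ with $\vR(y) \ge r(0)$ and $\E[|X_0|^{-1}] < \infty$ for the negative case, paralleling the proof of Lemma \ref{l:worstcase}). Combined with the uniform lower bound $\E_P[|G_0(y)|^{\pm \vep}] \ge 1/2$ (valid for $\vep$ small and $|y| < \kappa'$, via $e^x \ge 1 + x$), this converts the additive bound to a multiplicative one; choosing $k_\star(\Delta, \zeta, \vep)$ large enough that $\omega(k_\star) \Delta \le \vep \zeta/(4 C'')$ then concludes both \eqref{eq:goodmomentu} and \eqref{eq:goodmomentl}.

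The main obstacle is the quadratic-in-perturbation bound $\chi^2(Q_v \| P) = O((\omega(k) \Delta)^2)$: a merely linear bound would produce $\sqrt{\chi^2} = O((\omega(k) \Delta)^{1/2})$, which could not be absorbed into $(e^{\vep \zeta} - 1) = O(\vep \zeta)$ regardless of the choice of $k$. Fortunately this quadratic behavior, reflecting the Fisher information quadratic form at a stationary minimum, admits for Gaussians an explicit expression via completing the square in $\int q_v^2/p\, dx$, and its constants depend only on the spectral radii of $\bSig_{1|2}^{-1}$, which are uniformly bounded by Lemma \ref{l:diagdominant}(c).
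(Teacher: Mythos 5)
Your proof is correct, and it takes a genuinely different route from the paper. The paper's proof bounds the Radon--Nikodym derivative $f_J(\mathbf{x})$ \emph{pointwise} by an explicit function $\widehat{f}_{\omega,\Delta}(\mathbf{x}) := (1-\omega)^{-1}\exp\{\omega(x_1^2+x_2^2+3\Delta|x_1|+3\Delta|x_2|)\}$ that decreases to $1$ as $\omega \searrow 0$, then uses dominated convergence (after checking $g_{u,\pm\vep}\cdot(1+\widehat{f}_{\omega,\Delta})\in\mathcal{L}^1_\gamma$ uniformly in $u$) to drive the multiplicative error to $1$ as $k\to\infty$. You instead control the \emph{$L^2(P)$-norm} of the centered density $dQ_v/dP - 1$ via the $\chi^2$-divergence of two nearby Gaussians, and trade uniform integrability of $g_{u,\pm\vep}$ for a single second-moment estimate $\E_P[|G_0(y)|^{\pm 2\vep}]^{1/2}$. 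Both hinge on the same diagonal-dominance inputs from Lemma \ref{l:diagdominant}(a)--(b): conditional mean $O(\omega(k)\Delta)$, conditional covariance within $O(\omega(k)^2)$ of $\bI_2$. Your route is cleaner in that it reduces everything to one scalar quantity with an explicit Gaussian formula, whereas the paper's argument is more elementary but requires a more careful integrability check; neither gains or loses generality here since the laws are Gaussian.

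One side-remark of yours is incorrect and worth flagging: you claim a linear bound $\chi^2(Q_v\|P) = O(\omega(k)\Delta)$ would not suffice ``regardless of the choice of $k$.'' Since $k_\star$ is permitted to depend on $(\Delta,\zeta,\vep)$ and $\omega(k)\to 0$, a linear (or indeed any positive-power) bound in $\omega(k)$ would still go to zero as $k\to\infty$ with $\Delta$ fixed, and could therefore be absorbed into $(e^{\vep\zeta}-1)\E_P[|G_0(y)|^{\pm\vep}] \ge \vep\zeta/2$ simply by taking $k_\star$ larger. The quadratic scaling is a pleasant structural fact (and yields a smaller $k_\star$) but is not the bottleneck you describe; the main body of your argument, however, is unaffected by this and stands.
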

\begin{proof} We use the representation \eqref{eq:XYG}, dividing
\eqref{eq:goodmomentu} and \eqref{eq:goodmomentl} 
by $\vR(y)^{\pm \vep}$, respectively. Then, with 
$u := y^2 \vI(y)/\vR (y) \in [0,1]$
and $g_{u,\pm \vep}({\bf x}) := (x_1^2 + u x_2^2)^{\pm \vep/2}$, the
stated inequalities amount to
\begin{equation}\label{eq:equiv-goodmoment}
{\bf 1}_{\{H_J(y) \le \Delta\}}
\int_{\R^2} g_{u,\pm \vep}({\bf x}) f_{J}({\bf x}) d \gamma({\bf x}) 
\le e^{\vep \zeta} \int_{\R^2} g_{u,\pm \vep}({\bf x}) d \gamma ({\bf x}) \,,
\end{equation}
where $f_{J}(\cdot)$ is the Radon-Nikodym density of 
the conditional law of $(X_0(y),Y_0(y))$ with respect to the 
standard two-dimensional Gaussian law $\gamma$.
Recall Lemma \ref{l:diagdominant}(a)
that for any $J \in \cJ_k$, $k \ge k_o$, the 
two-dimensional covariance matrix 
$\bSig_{1|2} := \bI_2 - \bS$ of $(X_0(y),Y_0(y))$ given 
$\{G_j(y), j \in J\}$, satisfies
$\|\bS\|_{\infty \to \infty} \le \omega(k)^2/(1-\omega(k)) \le \omega(k)$. 
Further, by Lemma \ref{l:diagdominant}(b), 
the conditional mean $\bmu$ of $(X_0(y),Y_0(y))$ must satisfy
$\|\bmu\|_\infty \le 2 \omega(k) H_J(y)$. Here 
$\omega = \omega(k) \le 1/3$, so similarly 
to the derivation of \eqref{eq:bd-rnd}, we have for 
the (random)
two-dimensional Radon-Nikodym density $f_J(\cdot)$ that  
\begin{equation}\label{eq:bd-rd2}
f_{J}({\bf x}) = 
|\bSig_{1|2}|^{-1/2} \exp\Big \{ 
\frac{1}{2} \big( \langle {\bf x},{\bf x} \rangle -
\langle {\bf x} - \bmu , \bSig_{1|2}^{-1} ({\bf x} - \bmu) \rangle \big) \Big\}  
\le \widehat{f}_{\omega(k),H_J(y)} ({\bf x}) \,,
\end{equation}
where for any fixed $\Delta<\infty$,
\[
\widehat{f}_{\omega,\Delta} ({\bf x}) :=
(1-\omega)^{-1} \exp\Big\{
\omega (x_1^2+x_2^2 + 3 \Delta |x_1|+ 3 \Delta |x_2|) \Big\} \searrow 1\,,
\quad \mbox{ when } \quad \omega \searrow 0 \,.  
\]
Note that $g_{u,\vep} \le g_{1,\vep}$ and $g_{u,-\vep} \le g_{0,-\vep}$.
Further, both 
$g_{1,\vep} \cdot (1+\widehat{f}_{\omega,\Delta})$ and 
$g_{0,-\vep}\cdot (1+\widehat{f}_{\omega,\Delta})$ are in 
$\mathcal{L}^1_{\gamma}(\R^2)$
as soon as $\vep \le \vep_\star <1$ and $\omega < 1/2$. Consequently, per 
$\vep \le \vep_\star$ and $\Delta<\infty$, the functions
${\bf x} \mapsto g_{u,\pm \vep}({\bf x}) |\widehat{f}_{\omega,\Delta}({\bf x}) -1|$ 
are uniformly in $u$ (and $\omega \le 1/3$), integrable with respect 
to 
$\gamma$, and converge pointwise to zero as $\omega \searrow 0$. Thus, 
\[
\lim_{\omega \searrow 0} \sup_{u \in [0,1]} 
\Big| 
\int_{\R^2} g_{u,\pm \vep}({\bf x}) \widehat{f}_{\omega,\Delta}({\bf x}) d\gamma({\bf x})
-
\int_{\R^2} g_{u,\pm \vep}({\bf x}) d\gamma({\bf x})
\Big| = 0 \,,
\]
which together with \eqref{eq:bd-rd2} and \eqref{eq:summable-corr} 
imply the existence of finite $k_\star(\Delta,\zeta,\vep) \ge k_o$
such that \eqref{eq:equiv-goodmoment} holds whenever
$J \in \cJ_{k_\star}$
and $|y|<\kappa'$.
\end{proof}

\section{Moment computations: Proof of Proposition \ref{p:fracmoment}}
\label{sec:mom}

\subsection{Proof of \eqref{eq:fracmomentu}}
Since $c(\Delta)$ of Lemma \ref{l:badprob} is unbounded,
for any $\zeta>0$ and $\vep \le \vep_\star < 1$ 
(where $\vep_\star$ is from Lemma \ref{l:goodmoment}), we can take
$\Delta=\Delta(\zeta,\vep)$ so large that 
\begin{equation}\label{eq:choice-Delta}
C_\star e^{-c(\Delta)} \le \vep \zeta e^{\vep \zeta}
\E\big[|G_0(y)|^{2\vep}\big]^{1/2} \,,
\end{equation}
where $C_\star$ is the finite constant from Lemma \ref{l:bad}.
Given such $\Delta$, let
$h_{\vep,\Delta}(G) := |G|^\vep {\bf 1}_{\{
|X| \vee |Y| \le \Delta\}}$ (for $G(y)$, $X(y)$, $Y(y)$ 
related as in \eqref{eq:XYG}).
Then,
 fix $k \ge k_\star$ (also from
Lemma \ref{l:goodmoment}),
and partition the expression $M_m(\vep)$ of \eqref{eq:fracmomentu}
according to 
$B_\Delta$ of \eqref{dfn:bad}, to get that 
\begin{eqnarray}
\label{e:bad-part}
M_m(\vep) = \sum_{B} \E \Big[ \prod_{j \in B} |G_{jk}(y)|^{\vep} 
\, {\bf 1}_{\{B \subseteq B_\Delta\}} 
\prod_{j\in B^c} h_{\vep,\Delta} (G_{jk}(y))
\Big] \,,
\end{eqnarray}
where 
the sum is over all $B \subseteq \{1,\ldots,m\}$. For
$(1-\vep)/2 \ge 1/4$, using H\"older's inequality 
we bound the generic 
summand on the \abbr{rhs} of \eqref{e:bad-part} by
\begin{equation}
\label{e:cs2}
\E \Big [ \prod_{j \in B} |G_{jk}(y)|^2 \Big]^{\vep/2} \,
\P(B \subseteq B_\Delta)^{1/4} \,
\E \Big[ 
\prod_{j\in B^c} h_{2\vep,\Delta}(G_{jk}(y)) \Big]^{1/2} \,.
\end{equation}
Enumerating $B^c=\{j_1,j_2,\ldots\}$ with $1 \le j_1<j_2<\cdots \le m$
and utilizing the stationarity of $\{ G_{j}(y) \}_j$, we 
appeal sequentially for $s=1,\ldots$, to \eqref{eq:goodmomentu} 
with $k^{-1} J = B^c \setminus \{j_1,\ldots,j_s\}$ shifted backward by $j_s$, 
to deduce that (since $k \ge k_\star$),
\begin{equation}\label{e:good1}
\E \Big[ 
\prod_{j\in B^c} h_{2\vep,\Delta}(G_{jk}(y)) \Big] \le 
\Big( e^{2 \vep \zeta} \E\big[|G_0(y)|^{2\vep}\big] \Big)^{|B^c|} \,.
\end{equation}
Further bounding the left term of \eqref{e:cs2} via Lemma \ref{l:bad}
and the middle one via Lemma \ref{l:badprob},
we complete the proof by deducing
from \eqref{e:bad-part} and \eqref{e:good1} that 
\begin{align}
M_m(\vep) &\leq \sum_B C_\star^{|B| \vep} e^{-c(\Delta)|B|} 
\Big( e^{2 \vep \zeta} \E\big[|G_0(y)|^{2\vep}\big] \Big)^{|B^c|/2} 
\nonumber \\
& = \Big\{ C_\star^{\vep} e^{-c(\Delta)} + e^{\vep \zeta}
\E\big[|G_0(y)|^{2\vep}\big]^{1/2} \Big\}^m \le 
 e^{2 \vep \zeta m} \E\big[|G_0(y)|^{2\vep}\big]^{m/2} 
\label{e:goodbad}
\end{align}
(with the last inequality holding thanks to having chosen
$\Delta$ that satisfies \eqref{eq:choice-Delta}).
\qed

\subsection{Proof of \eqref{eq:fracmomentl}}
Here Lemma \ref{l:worstcase} replaces
Lemma \ref{l:bad}, so upon further reducing
$\vep$ to satisfy $\vep \le \vep_o$ we set 
$\Delta=\Delta(\zeta,\vep)$ so large that 
\begin{equation}\label{eq:choice-Delta-neg}
C_o e^{-c(\Delta)} \le \vep \zeta e^{\vep \zeta}
\E\big[|G_0(y)|^{-2\vep}\big]^{1/2} \,,
\end{equation}
where $C_o$ and $\vep_o$ are the finite constants
from Lemma \ref{l:worstcase}. Proceeding as
in the proof of \eqref{eq:fracmomentu}, for $k \ge k_\star \ge k_o$ 
we
partition the expression $M_m(-\vep)$ of \eqref{eq:fracmomentl}
according to $B_\Delta$ to get the identity \eqref{e:bad-part} 
at $-\vep$. Then, analogously to \eqref{e:cs2}, 
we apply H\"older's inequality 
to bound the generic summand on the \abbr{rhs} of that identity 
(now at $-\vep$), by 
\begin{equation}
\label{e:cs1}
\E \Big [ \prod_{j \in B} |G_{jk}(y)|^{-4 \vep} \Big]^{1/4} \,
\P(B \subseteq B_\Delta)^{1/4} \,
\E \Big[ 
\prod_{j\in B^c} h_{-2\vep,\Delta}(G_{jk}(y)) \Big]^{1/2} \,.
\end{equation}
The only difference \abbr{wrt} \eqref{e:cs2} is 
the first exponent $-4\vep$ instead of $-2$
(as $\E [|G_0(y)|^{-2}]=\infty$). The middle and 
last term of \eqref{e:cs1} are handled precisely 
as in the proof of \eqref{eq:fracmomentu}, upon 
appealing to Lemma \ref{l:badprob} and \eqref{eq:goodmomentl}, respectively.
With $\Delta$ satisfying \eqref{eq:choice-Delta-neg},
the proof of \eqref{eq:fracmomentl} is thus complete 
upon establishing that
\begin{equation}\label{e:good2}
\E \Big [ \prod_{j \in B} |G_{jk}(y)|^{-4 \vep} \Big] \le C_o^{|B|} \,.
\end{equation}
Similarly to the derivation of \eqref{e:good1}, upon enumerating 
$B=\{j_1 < j_2 < \ldots\}$ we get the bound \eqref{e:good2}
by repeated conditioning and using
\eqref{e:worstcase} for $s=1,2,\ldots$ with 
$k^{-1} J=B \setminus \{j_1,\ldots,j_s\}$ shifted backward by $j_s$.
\qed

\medskip
\noindent
{\bf Open problem:} 
Does the exponential upper tail of \eqref{eq:upper-tail} hold in case 
of covariance $r(t)=\sinc(t)$ 
(with spectral density $p(\lm)=\frac{1}{2} {\bf 1}_{[-1,1]}(\lm)$)?
Note that this covariance satisfies Assumption A (for $x_\star=2\pi$), 
apart from the lack of 
summability of the $r_1'(\cdot;2y)$ term in \eqref{eq:reg-spect-dens}.

\medskip
{\sc{Acknowledgments:}}
We thank Mikhail Sodin for motivating this research (in discussions with NF), and 
Manjunath Krishnapur for suggesting (to RB) that complex zeroes may be useful for showing concentration.

\bigskip
\begin{center}
  {\sc \large References}
\end{center}
\begin{biblist}[\normalsize]


\bib{Ahlfors}{book}{
	author={Ahlfors, L.V.},
	title={Complex analysis}, 
	edition={third edition},
	publisher={McGraw Hill, NY},
	year={1979}
}

\bib{AL}{article}{
   author={Aza\"\i s, J-M},
   author={Le\'on, J. R.},
   title={CLT for crossings of random trigonometric polynomials},
   journal={Elect. J. Prob.},
   volume={18},
   date={2013},
   pages={no. 68, 17},
   issn={1083-6489},
   review={\MR{3084654}},
   doi={10.1214/EJP.v18-2403},
}

\bib{BJ}{article}{
   author={Baxter, J.R.},
   author={Jain, N.C.},   
   title={An approximation condition for large deviations and some 
   applications},
   journal={in Convergence in ergodic theory and probability},
   publisher={de Gruyter},
   date={1996},
   pages={63-90}
}   
   
\bib{BD}{article}{
	author={Bryc, W.}
	author={Dembo, A.},
	title={On large deviations of empirical measures for stationary Gaussian processes},
	journal = {Stoch. Proc. Appl.},
	volume={58},
	date={1995},
	pages={23--34}
}

\bib{Cuz}{article}{
    	author={Cuzick, J.},
    	title={A central limit theorem for the number of zeros of a stationary Gaussian process},
	journal={Ann. Probab.},
	volume= {4},
	date={1976},
	pages={ 547-556}
}

\bib{Cuz2}{article}{
	author={Cuzick, J.},
	title={Local non-determinism and the zeros of Gaussian processes},
	journal={Ann. Probab.},
	volume={6},
	date={1978},
	pages={72--84}
}

\bib{Nmean}{article}{
    author={Feldheim, N.},
    title={Zeroes of Gaussian analytic functions with translation-invariant distribution},
    journal={Israel J. Math.},
    volume={195},
    date={2013},
    pages={317\ndash 345}
}

\bib{FFN}{article}{
	author={Feldheim, N.},
	author={Feldheim, O.},
	author={Nitzan, S.},
	title={ Persistence of Gaussian stationary processes: a spectral perspective},
	date={2017},
	note={Preprint available at arXiv:1709.00204}
}

\bib{GAFbook}{book}{
	author={Hough, J.B.},
	author={Krishnapur, M.},
	author={Peres, Y.},
	author={Virag, B.},
	title={Zeroes of Gaussian analytic functions and determinantal processes},
	series={University Lecture Series},
	volume={51},
	publisher={Amer. Math. Soc.},
	year={2009}
}

\bib{Kac}{article}{
	author={Kac, M.},
	title= {On the average number of real roots of a random
    algebraic equation}, 
	journal={Bull. Amer. Math. Soc.},
	volume= {18},
	date={1943}, 
	pages={29--35}
}

\bib{Kah}{book}{
	author={Kahane, J.-P.},
	title={Some random series of functions},
	series={Cambridge Studies in Advanced Mathematics},
	volume={5},
	publisher = {Cambridge University Press},
	year = {1993},
	edition={second edition}
}

\bib{Manju}{article}{
	author={Krishnapur, {M.}},
	title={Overcrowding estimates for zeroes of planar and hyperbolic Gaussian analytic functions},
	journal={J. Stat. Phys.},
	volume={124 (6)},
	date={2006}, 
	pages={1399--1423}
}

\bib{LR}{book}{
	author={Leadbetter, M.R.},
	author={Lindgren, G.},
	author={Rootz\'{e}n, H.},
	title={Extremes},
	publisher={Springer},
	series={Springer series in Statistics},
	year={1983}
}

\bib{LH}{article}{
	author={Longuet-Higgins, M. S.},
	title={The distribution of intervals between zeros of a stationary random function},
	journal={Phil. Trans. Royal Society London, Series A: Math. and Phys. Sciences},
	volume={254 (1047)}, 
	date={1962},
	pages={557--599}
}

\bib{NS_nodal}{article}{
	author={Nazarov, F.},
	author={Sodin, M.},
	title={On the number of nodal domains of random spherical harmonics},
	journal={Amer. J. Math.},
	volume={131},
	date={2009},
	number={5},
	pages={1337-1357}
}

\bib{NS_zeros}{article}{
	author={Nazarov, F.},
	author={Sodin, M.},
	title={Fluctuations in random complex zeroes: asymptotic normality revisited},
	journal={Int. Math. Res. Notices},
	date={2011},
	volume={ 24},
	pages={5720--5759}
}

\bib{NSV}{article}{
	author={Nazarov, F.},
	author={Sodin, M.},
	author={Volberg, A.},
	title={The Jancovici-Lebowitz-Manificat law for large fluctuations of random complex zeroes}
	journal={Comm. Math. Phys.},
	volume={284},
	 date={2008}, 
	issue={ 3}, 
	pages={833--865}
}

\bib{NW}{article}{
	author={Nualart, D.},
	author={Wschebor, N.},
	title={Int\'egration par parties dans l’espace de Wiener et approximation du temps local}, 
	journal={Prob. Th. Rel. Fields},
	volume={90}, 
	date={1991},
	pages={83--109}
}

\bib{Rice}{article}{
	author={Rice, S.O.},
	title={Mathematical analysis of random noise}, 
	journal={Bell Sys. Tech. Jour.},
	volume={24},
	date={1945}, 
	pages={46--156}
}

\bib{Slep}{article}{
	author={Slepian, D.},
	title={The one-sided barrier problem for Gaussian noise},
	jounral={Bell Systems Technical Journal},
	volume={41},
	date={1962},
	pages={463\ndash 501}
}

\bib{Slud}{article}{
	author={Slud, E.},
	title={Multiple Wiener-Ito integral expansions for level-crossing-count functionals}, 
	journal={Prob. Th. Rel. Fields},
	volume={87},
	date={1991},
	pages={349--364}
}

\bib{Sodin}{article}{
	author={Sodin, M.},
	title={Private communication}
}

\bib{ST1}{article}{
	author={Sodin, M.},
	author={Tsirelson, B.},
	title={Random complex zeroes. I. Asymptotic normality},
	journal={Israel J. Math.},
	volume={144},
	date={2004},
	pages={125--149}
}

\bib{ST3}{article}{
	author = {Sodin, M.},
	author={Tsirelson, B.},
    title = {Random complex zeros, III. Decay of the hole probability},
    journal = {Israel J. Math},
    year = {2005},
	volume = {147},
    pages = {371--379}
}

\bib{SL}{article}{
	author={Song, I.},
	author={Lee, S.},
	title={Explicit formulae for product moments of multivariate Gaussian random variables},
	journal={Stat. Prob. Letters},
	date={2015},
	volume={100},
	pages={27--34}
}

\bib{Tsirel}{article}{
	author={Tsirelson, B.},
	title={Gaussian Measures (notes for a gradute course in Tel-Aviv University)}, 
	date={2010},
	note= {\url{http://www.tau.ac.il/~tsirel/Courses/Gauss3/main.html} }
}

\end{biblist}

\bigskip

\bigskip

\footnotesize

  R.~Basu, \textsc{
International Centre for Theoretical Sciences, Tata Institute of Fundamental Research, Bangalore 560089, INDIA.
}\par\nopagebreak
  \textit{E-mail address}: \texttt{rbasu@icts.res.in}

  \smallskip

  A.~Dembo, \textsc{
Department of Mathematics, Stanford University, Stanford CA 94305, USA.
}\par\nopagebreak
  \textit{E-mail address}: \texttt{adembo@stanford.edu}

  \smallskip

  N.~Feldheim, \textsc{
Department of Mathematics,
Weizmann Institute of Science, Rehovot 76100, Israel. 
}\par\nopagebreak
  \textit{E-mail address}: \texttt{trinomi@gmail.com}

  \smallskip
  O. Zeitouni, \textsc{
Department of Mathematics,
Weizmann Institute of Science, Rehovot 76100, Israel. 
}\par\nopagebreak
  \textit{E-mail address}: \texttt{ofer.zeitouni@weizmann.ac.il}

\end{document}